\documentclass[11pt,reqno,english,a4]{amsart}
\usepackage{amsmath,amsthm,amsfonts,amssymb,amscd}
\usepackage[latin1]{inputenc}
\usepackage{psfrag}
\usepackage{epsfig}
\usepackage{a4wide}
\usepackage[]{graphicx}
\usepackage{times}
\usepackage{bm}
\usepackage{mathdots}
\usepackage{mathrsfs}
\usepackage{caption}

\newcommand{\R}{\mathbb{R}}

\newcommand{\N}{\bf N}
\newcommand{\A}{\bf A}
\newcommand{\K}{\bf K}

\newcommand{\fg}{\mathfrak g}
\newcommand{\fa}{\mathfrak a}

\newcommand{\fk}{\mathfrak k}

\newcommand{\fs}{\mathfrak {so}}
\newcommand{\fn}{\mathfrak n}

\newcommand{\D}{\mathcal{D}}

\newtheorem{definition}{Definition}[section]

\newtheorem{theorem}[definition]{Theorem}

\newtheorem{cor}[definition]{Corollary}
\newtheorem{rem}[definition]{Remark}
\newtheorem{pro}[definition]{Proposition}

\def\co {cohomogeneity one }

\begin{document}
	
	\title[On cohomogeneity one linear actions on pseudo-Euclidean space $\mathbb{R}^{p,q}$]
	{On cohomogeneity one linear actions on pseudo-Euclidean space $\mathbb{R}^{p,q}$}
	
	\author{P. Ahmadi}	
	\author{S. Safari}

	\thanks{}
	
	\keywords{Cohomogeneity one, Isometric action, Pseudo-Euclidean Space}
	
	\subjclass[2010]{57S25, 53C30}
	
	\date{\today}
	\address{
		P. Ahmadi\\
		Departmental of mathematics\\
		University of Zanjan\\
		University blvd.\\
		Zanjan\\
		Iran}
	\email{p.ahmadi@znu.ac.ir}
	
	\address{
		S. Safari\\
		Departmental of mathematics\\
		University of Zanjan\\
		University blvd.\\
		Zanjan\\
		Iran}
	\email{salim.safari@znu.ac.ir}

	\begin{abstract}
		The aim of this paper is to study cohomogeneity one isometric linear actions on the $p+q$-dimensional pseudo-Euclidean space $\R^{p,q}$.  It is proved that the natural isometric action of the nilpotent factor of an Iwasawa decomposition of $SO(p,q)$ is not of cohomogeneity one. The orbits of cohomogeneity one actions of some subgroups of a maximal parabolic subgroup of the isometry group of $\R^{p,q}$ are determined and it is proved that there exist cohomogeneity one isometric actions on $\R^{p,q}$ which are orbit-equivalent on the complement of a $p$-dimensional degenerate subspace $\mathbb{W}^p$ of $\R^{p,q}$ and not orbit-equivalent on $\mathbb{W}^p$. 
	\end{abstract}
	
	\maketitle
	\medskip
	\medskip
	
	\thispagestyle{empty}
	

\section{Introduction and Preliminaries}

 The study of non-transitive actions of transformation groups on manifolds is an interesting problem. The first and most natural case is the case when the action has an orbit of codimension one, the so called \textit{cohomogeneity one} action. The concept of a cohomogeneity one
 action on a manifold $M$ was introduced by P.S. Mostert in his 1956 paper \cite{Mos}. The key hypothesis was the compactness of the acting Lie group in the paper. He assumed that the acting Lie group $G$ is compact and determined the orbit space up to homeomorphism. More precisely, he proved that by the cohomogeneity one action of a compact Lie group $G$ on a manifold $M$ the orbit space $M/G$ is homeomorphic to one of the spaces $\R$, $S^1$, $[0,1]$, or $[0,1)$. In the general case, in \cite{Ber} B. Bergery showed that if a Lie group acts on a manifold properly and with cohomogeneity one, then the orbit space $M/G$ is homeomorphic to one of the above spaces. 

A result by D. Alekseevsky in \cite{Alek} says that, for an arbitrary Lie group $G$,  the action of $G$ on $M$ is proper if and only if there is a complete $G$-invariant Riemannian metric $g$ on $M$ such that $G$ is a closed Lie subgroup of $Isom(M,g)$. This theorem provides a link between proper actions and Riemannian $G$-manifolds.

Cohomogeneity one Riemannian manifolds have been studied by many mathematicians (see, e.g., \cite{AA, Ber, Bernd, GWZ, Koll, MK, PS, S, V2}). The subject is still an active one. The common hypothesis in the theory is that the acting group is closed in the full isometry group of the Riemannian manifold and the action is isometrically. When the metric is indefinite, this assumption in general does not imply that the action is proper, so the study becomes much more complicated. Also, some of the results and  techniques of definite metrics fail for indefinite metrics (see, e.g., \cite{AK1, Pa, BDV}). 

Here we assume that $M$ is the pseudo-Euclidean space $\R^{p,q}$, that is the $p+q$-dimensional real
vector space $\R^{p+q}$ with the scalar product of signature $(p,q)$
given by  
\begin{equation}\label{scalar}\langle x,y\rangle=\sum_{i=1}^p x^iy^i - \sum_{j=1}^q x^jy^j,
\end{equation}
where $x=(x^1,\cdots,x^{p+q})$ and $y=(y^1,\cdots,y^{p+q})$, and $G$ is a closed Lie subgroup of the isometry group of $\R^{p,q}$ which acts on it with cohomogeneity one. Throughout the paper it is assumed that $p\geqslant q$, since $\R^{p,q}$ is anti-isometric to $\R^{q,p}$. 

The standard basis for $\R^{p,q}$ is denoted by $(e_1,..., e_{p+q})$. 
Let $\Pi_i$ be the hyperplane defined by the equation $x^{p-i+1}+x^{p+i}=0$, where $1\leqslant i\leqslant q$. Then $\Pi_i=w_i^\perp$ where $w_i=e_{p-i+1}-e_{p+i}$. To adjust the notations, we assume that $\Pi_0=\R^{p,q}$. If $p\neq q$, let $\mathscr{P}_j$ denote the hyperplane defined by $x^j=0$, where $1\leqslant j\leqslant p-q$.

 An isotropic subspace of $\R^{p,q}$ is a vector subspace $V\subset \R^{p,q}$ with the property that 
$$ \langle v,w\rangle=0;\quad \forall v,w\in V.$$
Let $V$ be a maximal isotropic subspace of $\R^{p,q}.$ Hence $\dim V=q$. The corresponding maximal parabolic subgroup of $SO_{o}(p,q)$ is the stabilizer of $V$ in $SO_{o}(p,q)$:
$$ Q=\{g\in SO_{o}(p,q)| g.V=V\}.$$
If $V=\bigcap_{j=1}^{p-q}\mathscr{P}_j\cap\bigcap_{i=1}^q\Pi_i$, then $Q=\K_{0}\A\N$, where the subgroups $\K_0$, $\A$ and $\N$ are introduced in section \ref{iwasawa}. One interesting class of \co actions on $\R^{p,q}$ is given by certain subgroups of the maximal parabolic subgroup $Q$. Our first result is Theorem \ref{N}. This result states that the action of $\N$ on $\R^{p,q}$ is not of cohomogeneity one, and its proof indicates that the action is of cohomogeneity two. We consider the actions of subgroups $\K'\A\N$ on $\R^{p,q}$, where $\K'\subseteq\K_0$, and investigate throughly the orbit structure of these actions (see Theorem \ref{mainresult}). Our results, Theorem \ref{mainresult} and Proposition \ref{degspace}, generalize Theorem 4.2 and Corollary 4.3 of \cite{BDV}. A notable feature of these actions is that there exists a $p$-dimensional degenerate subspace $\mathbb{W}^p$ of $\R^{p,q}$ such that the induced orbits of all of these actions on $\R^{p,q}\diagdown \mathbb{W}^p$ coincide, whereas the orbit structures are different on $\mathbb{W}^p$.

\section{Iwasawa decomposition of $ \mathfrak{so} (p,q) $}\label{Lie algebra}\label{iwasawa}
In this section we introduce a fixed Iwasawa decomposition of $\mathfrak{so}(p,q)$ which will be used in the sequel. We remind that $p\geqslant q$.

The Lie algebra $ \mathfrak{so}(p,q) $ of the linear group $ SO(p,q) $ is given by
\begin{align*}
\mathfrak{so}(p,q) = \left\lbrace  \left(\begin{array}{cc}A&B \\
B^{t} & D \end{array}\right): A\in \mathfrak{so}(p), D\in \mathfrak{so}(q) ~and~B\in {\rm M}_{p\times q}(\R) \right\}
\end{align*} 
where $M_{p\times q}(\R)$ denotes the space of  $p\times q$ real matrices.
The notation $X=\left(\begin{array}{cc}A&B \\
B^{t} & D \end{array}\right)$ is used for a typical element of $\mathfrak{so}(p,q)$ throughout the paper. Hence $ A=(A_{ij})\in \mathfrak{so}(p), D=(D_{ij})\in \mathfrak{so}(q)$ and $ B=(B_{ij})\in M_{p\times q}(\R)$, where $M_{p\times q}(\R)$ denotes the vector space of $p\times q$ real matrices. Obviously, $ A_{ij}=-A_{ji}$ and $ D_{ij}=-D_{ji}.$

The Cartan involution $\theta(X) =-X^{t}$ of $\fs (p,q)$ induces the Cartan decomposition 
$$\fs (p,q) = \fk\oplus \mathfrak{p} $$
with
\begin{align*}
\fk = \left\lbrace  \left(\begin{array}{cc}A&O \\
O & D \end{array}\right)\in \fs (p,q)\right\}\cong \fs (p)\times \fs (q),\quad \quad
\mathfrak{p}=\left\lbrace  \left(\begin{array}{cc}O&B \\
B^{t}& O \end{array}\right)\in \fs (p,q) \right\}\cong \R^{p\times q}.
\end{align*}

The subspace
\begin{align}\label{mathfrak{a}}
\mathfrak{a}=\left\lbrace  \left(\begin{array}{cc} O& C\\ C^{t} & O
 \end{array}\right)|\ C\in M_{p\times q}(\R) \right\}\subseteq \mathfrak{p}
\end{align}
that
\begin{align}
C=  \left(\begin{array}{ccccc} 0&&\cdots&&0\\\vdots &&& & \vdots \\  0 &&\cdots && 0 \\ 0 & &\cdots && c_{q}\\ & & &.&  \\&&.&&\\ & .&&&\\c_{1} && &&0
 \end{array}\right)
\end{align}
where $c_i\in \R$ for $1\leqslant i\leqslant q$, is a maximal abelian subspace of $\mathfrak{p}$. Let ${\bf A}=\exp (\mathfrak{a})$. The Lie subalgebra $\mathfrak{a}$ is abelian, so by a direct computation  one gets that
\begin{eqnarray*}\label{A}
	{\bf A}=\{X\in SO(p,q):X=\sum_{i=1}^{p-q} E_{ii}+\sum_{i=1}^{q} \{\cosh(c_{i})(E_{p-i+1,p-i+1}+E_{p+i,p+i})\\+\sinh(c_{i})(E_{p+i,p-i+1}+E_{p-i+1,p+i})\}\},
\end{eqnarray*}
where $ E_{ij} $ is the $(p+q)\times (p+q)$ matrix whose $ (i,j)$-entry is $ 1 $ and whose other entires are all $ 0. $

Let $ f_{i}$ be the member of $\mathfrak{a}^{*}$ whose value on the $\mathfrak{a}$ matrix indicated in $(\ref{mathfrak{a}})$ is $-c_{i}$. Then the restricted roots include all linear functional $\pm f_{i}\pm f_{j} $, with $i\neq j$. Also the $\pm f_{i} $ are restricted roots if $ p\neq q$. 
Then the restricted-root spaces for $ \pm f_{i}\pm f_{j} $ are $1$ -dimensional, and the restricted-root spaces for $\pm f_{i} $ have dimension $p-q.$
Let $\Sigma$ be the set of restricted roots and $\Sigma^{+} $ be the positive ones, and define $ \fn =\bigoplus_{\lambda\in \Sigma^{+}} g_{\lambda}.$ By Proposition 6.40-b of \cite[p. 370]{Kn}, $\mathfrak{n}$ is a Lie subalgebra of $\mathfrak{g}$ and is nilpotent. Then $ \mathfrak{so}(p,q) = \mathfrak{k}\oplus \mathfrak{a}\oplus \mathfrak{n} $ is an Iwasawa decomposition of $ \mathfrak{so}(p,q) $.
Let $\mathfrak{so}(p,q) = \fg_{0}\oplus\bigoplus_{\lambda\in \Sigma} \mathfrak{g}_{\lambda}$ be the restricted root space
decomposition of $\mathfrak{so}(p,q)$ induced by $\mathfrak{a}$.
 Explicitly, $\mathfrak{g}_{0} = \mathfrak{k}_{0}\oplus \mathfrak{a}$ (see \cite[p. 370]{Kn})
 that 
\begin{align*}
\mathfrak{k}_{0}= \left\lbrace \left(\begin{array}{cc} K&0\\0&0 \end{array}\right)\in SO(p,q): K\in \mathfrak{so}(p-q) \right\}\cong \mathfrak{so}(p-q)
\end{align*}
Denote by ${\bf K}_0$ the Lie subgroup $\exp (\mathfrak{k}_{0})$. 

In the following proposition we give an explicit form of each member of $\fn.$ For any element $X=\left(\begin{array}{cc}A&B \\
B^{t} & D \end{array}\right)$ of $ \mathfrak{so}(p,q),$ the entries of $A$ and $D,$ those are above the diagonal, determine the below ones. The following proposition shows that they characterize the entries of $ B,$ for any $X\in \fn,$ as well.
\begin{pro}\label{fn}
Let $X=\left(\begin{array}{cc}A&B \\
B^{t} & D \end{array}\right)$ and $ X\in \mathfrak{so}(p,q).$ Then $ X\in \fn$ if and only if
 \begin{eqnarray}\label{N-equation}
\begin{cases}
A_{k,p-l+1}=B_{k,l};\quad 1\leq k\leq p-q,\quad 1\leq l\leq q,\\ A_{p+1-j,p+1-i}=B_{p+1-j,i}; \quad\quad\ \  1\leq i<j\leq q,\\D_{ij}=-B_{p+1-i,j};\qquad\qquad\qquad 1\leq i<j\leq q.
\end{cases}
\end{eqnarray}
If $p=q$, then the equations introduced in the first line of $(\ref{N-equation})$ are omitted.
\end{pro}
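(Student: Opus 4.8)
The plan is to describe $\fn$ as the space of ``eigenvalue-raising'' operators for a dominant element of $\fa$ and then read the statement off entry by entry. Recall that $\fn=\bigoplus_{\lambda\in\Sigma^{+}}\fg_{\lambda}$, so the assertion is really a concrete description of the sum of the positive restricted-root spaces. My first step would be to fix a regular $H_{0}\in\fa$ in the open positive Weyl chamber, i.e. one with $f_{i}(H_{0})=-c_{i}>0$ for all $i$, so that $\lambda(H_{0})>0$ precisely for $\lambda\in\Sigma^{+}$. Since the restricted-root decomposition $\fs(p,q)=\fg_{0}\oplus\bigoplus_{\lambda}\fg_{\lambda}$ is a direct sum and each $\fg_{\lambda}$ shifts $H_{0}$-eigenvalues by $\lambda(H_{0})$, an $X\in\fs(p,q)$ lies in $\fn$ if and only if its $\fg_{0}$- and negative-root components all vanish; equivalently, viewing $X$ as a linear operator on $\R^{p,q}$, $X$ must strictly raise the $H_{0}$-eigenvalue of every eigenvector.

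Next I would diagonalize $H_{0}$. Reading off the matrices in $(\ref{mathfrak{a}})$, $H_{0}$ annihilates $e_{1},\dots,e_{p-q}$ and acts on each plane $\mathrm{span}(e_{p-i+1},e_{p+i})$ by $\left(\begin{smallmatrix}0&c_{i}\\ c_{i}&0\end{smallmatrix}\right)$; hence its eigenvectors are $u_{i}^{\pm}=e_{p-i+1}\pm e_{p+i}$ with eigenvalues $\pm c_{i}=\mp f_{i}(H_{0})$, together with $e_{1},\dots,e_{p-q}$ of eigenvalue $0$. Because $f_{i}(H_{0})>0$, listing this eigenbasis in order of decreasing eigenvalue puts the $u_{i}^{-}$ first, the $e_{k}$ in the middle, and the $u_{i}^{+}$ last. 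With respect to this ordered basis the criterion of the previous paragraph is exactly that the matrix of $X$ be strictly upper triangular (equivalently, that $X$ map each eigenspace into the sum of the strictly larger ones).

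The core of the argument is to convert this triangularity into relations among the entries of $A$, $B$, $D$. I would compute the images $Xe_{k}$, $Xu_{i}^{\pm}$, re-expand them in the $u^{\pm},e_{k}$ basis, and set to zero every coefficient along an eigenvector of equal or larger index (equal or smaller eigenvalue). Using that $X\in\fs(p,q)$ (so $A,D$ are skew and the lower-left block is $B^{t}$), each such condition becomes a single identity between one entry of $B$ and a matching entry of $A$ or $D$. The $u_{i}^{+}$-component of $Xe_{k}$ gives the first family $A_{k,p-l+1}=B_{k,l}$, while the couplings among the $u_{i}^{\pm}$ produce the second family, on the entries $A_{p+1-j,p+1-i}$, and the third, on $D_{ij}$; the minus sign in $D_{ij}=-B_{p+1-i,j}$ is precisely the $-$ in $u_{i}^{-}=e_{p-i+1}-e_{p+i}$. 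Skew-symmetry of $A$ and $D$ identifies the conditions for $i<j$ with those for $i>j$, which is why only the ranges $1\le i<j\le q$ appear; the coefficients along equal-eigenvalue vectors force the remaining $\fg_{0}$-part of $X$ (the $\fs(p-q)$-block of $A$ and the $\fa$-entries of $B$) to vanish. When $p=q$ there is no zero eigenspace and no index $k$, so the first family is vacuous, giving the last sentence.

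I expect the only real difficulty to be the index and sign bookkeeping in this translation: keeping track of how the antidiagonal placement of the $c_{i}$ in $(\ref{mathfrak{a}})$ permutes rows and columns under the passage to the $u^{\pm}$ basis, and, relatedly, pinning down the correct Weyl chamber (the choice $f_{i}(H_{0})=-c_{i}>0$) so that the upper-triangularity reproduces the paper's $\Sigma^{+}$ and not its opposite --- this is what decides the signs in $(\ref{N-equation})$. Once the chamber and ordering are fixed there is nothing left to do for the converse: the upper-triangularity criterion is an ``if and only if'', so both implications of the proposition fall out of the same computation.
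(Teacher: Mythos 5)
Your proposal is correct, but it reaches (\ref{N-equation}) by a genuinely different route than the paper. The paper stays in the adjoint representation: it lists $\Sigma^{+}=\{f_{i}\pm f_{j}\}\cup\{f_{l}\}$ and solves $[H,X]=\alpha(H)X$ separately for each positive root (its systems (\ref{f_j}) and (\ref{f_if_j})), then sums the resulting root spaces. You work in the standard representation instead: you diagonalize a regular dominant $H_{0}\in\fa$ and characterize $\fn$ as the set of $X\in\fs(p,q)$ that strictly raise $H_{0}$-eigenvalues, i.e.\ are strictly (block-)triangular in the ordered eigenbasis $w_{1},\dots,w_{q},e_{1},\dots,e_{p-q},e_{p-q+1}+e_{p+q},\dots,e_{p}+e_{p+1}$; this criterion is exactly the content of the paper's Remark \ref{rem1}, which the paper deduces \emph{from} the proposition, so you have inverted the logical order and made it the engine of the proof. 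Your route is more uniform (one computation rather than a root-by-root case analysis) and has a concrete payoff: it forces, in addition to (\ref{N-equation}), the vanishing of the $\fs(p-q)$-block of $A$ and of the antidiagonal entries $B_{p+1-i,i}$ (the $\fa$-part) --- conditions the displayed system omits but which are genuinely needed, since the solution set of (\ref{N-equation}) alone is $\fk_{0}\oplus\fa\oplus\fn$ rather than $\fn$; the paper's proof only records them in the parenthetical ``their other entries are zero''. What the paper's computation buys in exchange is an explicit description of each individual root space $\fg_{f_{l}}$, $\fg_{f_{i}\pm f_{j}}$, which it reuses later. Two points to tighten in your write-up: first, the open positive chamber means $f_{1}(H_{0})>\dots>f_{q}(H_{0})>0$, not merely $f_{i}(H_{0})>0$ as your ``i.e.''\ suggests --- without the ordering you obtain a conjugate but different nilpotent subalgebra and a different triangularity pattern; second, ``strictly upper triangular'' is in general weaker than ``maps each eigenspace into the sum of strictly higher ones'', because the $0$-eigenspace is $(p-q)$-dimensional; the two agree on $\fs(p,q)$ only because the middle diagonal block of such a matrix is skew-symmetric, so keep the block-strict formulation (which is what your entry computation actually imposes, and what makes the converse direction, via directness of the root-space decomposition, go through).
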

\begin{proof}
First suppose that  $ p\neq q $. Let $H_{c_{1}\cdots c_{q}}$ denote a typical element of $\fa$ indicated in $(2).$ Define $ f_{i}:\mathfrak{a}\longrightarrow \R,$ by $ f_{i}(H_{c_{1}\cdots c_{q}})=-c_{i}.$ Then  by using a lexicographic ordering (see  \cite[p. 155]{Kn}), the set of restricted positive roots is
$$ \Sigma^{+}=\{f_{i}\pm f_{j}\}\cup \{f_{l}\},\quad 1\leq i<j\leq q,\quad 1\leq l\leq q.$$
If $\alpha \in \Sigma^+$ then the corresponding root space is defined by $$\fg_\alpha=\{X\in \fs (p,q) : ad(H)(X)=\alpha (H)X, \forall H\in \mathfrak{a}\}.$$
 Hence by a straightforward computation one gets that $\fg_{f_l}$ and $ \mathfrak{g}_{f_{i}\pm f_{j}}$ are constituted of all  
${\small \left( \begin{array}{cc}A & B\\ B^{t} & D \end{array}\right)}$ in $\mathfrak{so}(p,q)$, satisfying
\begin{eqnarray}\label{f_j}
 A_{k,p-l+1}=B_{k,l},\quad 1\leq k\leq p-q,\quad 1\leq l\leq q. 
\end{eqnarray}
 and
 \begin{eqnarray}\label{f_if_j}
 \begin{cases}
B_{p+1-i,j} c_{j}-B_{p+1-j,i} c_{i}=A_{p+1-j,p+1-i} (-c_{i}\pm c_{j})\\-A_{p+1-j,p+1-i} c_{i}-D_{i,j} c_{j}=B_{p+1-j,i} (-c_{i}\pm c_{j})\\A_{p+1-j,p+1-i} c_{j}+D_{i,j} c_{i}=B_{p+1-i,j} (-c_{i}\pm c_{j})\\B_{p+1-i,j} c_{i}-B_{p+1-j,i} c_{j}=D_{i,j} (-c_{i}\pm c_{j}),  \quad where\quad 1\leqslant i< j\leqslant q,
\end{cases}
\end{eqnarray}
respectively, and the other entries of $A$, $B$ and $D$ are zero. The equations (\ref{f_l}) gives the root-space $\fg_{f_j}$ explicitly. By using the equations (\ref{f_if_j}) one gets that $\fg_{f_i-f_j}$ and $\fg_{f_i+f_j}$ are one dimensional vector subspaces of $\mathfrak{so}(p,q)$ given by  $A_{p+1-j,p+1-i}= B_{p+1-j,i}=B_{p+1-i,j}=-D_{i,j}$ and  $ A_{p+1-j,p+1-i}=B_{p+1-j,i}=-B_{p+1-i,j}=D_{i,j}$, respectively (this means that their other entries are zero). Therefore, $\fg_{f_i-f_j}\oplus \fg_{f_i+f_j}$ is the two dimensional vector space given by 
$A_{p+1-j,p+1-i}=B_{p+1-j,i}$ and $D_{i,j}=-B_{p+1-i,j}$. These imply that 
$$\fn=\bigoplus_{1\leqslant l\leqslant q}\fg_{f_l}\oplus\bigoplus_{1\leqslant i<j\leqslant q}(\fg_{f_i-f_j}\oplus\fg_{f_i+f_j})$$
is of the claimed form.
\end{proof}
As a consequence of Proposition \ref{fn} one gets that $\dim {\bf N}=q(p-1)$.
\begin{rem}
	By proposition \ref{fn} every element of the nilpotent subalgebra $ \fn$ is in one of the following forms. (Here $ m(A_{ij})$ denotes the mirror image of $A_{ij}$ with respect to the dashed line between them, and the entries below the diagonal are determined by the upper ones, since $\fn\subseteq \mathfrak{s}(p,q)$).
	\end{rem}

\begin{figure}[!h]
	\centerline{\includegraphics[height=5.2cm]{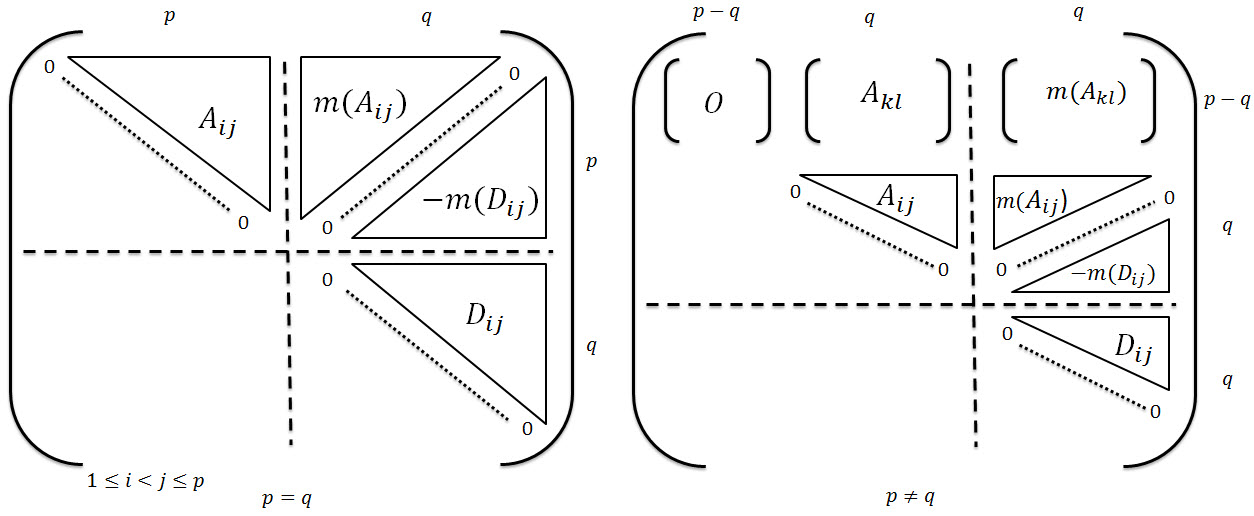}}
\end{figure}
	

\section{The action of the nilpotent factor ${\bf N}$ on $\R^{p,q}$ }
The nilpotent factor of $SO_\circ (p,q)$ is a $q(p-1)$ dimensional Lie subgroup. When $q=1$, then $\dim {\bf N}=p-1$ and so its action is not of \co on $\R^{p,1}$. In the following theorem we show that this result is true for arbitrary positive integer $q$.

\begin{theorem}\label{N}
The action of $\N$ on $\R^{p,q}$ is not of cohomogeneity one.
\end{theorem}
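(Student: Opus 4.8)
The plan is to exhibit two functionally independent $\N$-invariant functions on $\R^{p,q}$; their common level sets have codimension two on an open dense set, which forces every $\N$-orbit to have codimension at least two. The first invariant is immediate: since $\N\subseteq SO_\circ(p,q)$, the quadratic form $q_0(x)=\langle x,x\rangle$ is constant along every orbit. This alone only yields codimension one, so the crux of the argument is to produce a second, independent invariant.

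For the second invariant I would use the highest restricted weight. Under the adjoint action of $\fa$ the space $\R^{p,q}$ decomposes into the weight $0$ (spanned by $e_1,\dots,e_{p-q}$) together with the weights $\pm f_i$, where $w_i=e_{p-i+1}-e_{p+i}$ spans the $f_i$-weight space. With the lexicographic ordering fixed in the proof of Proposition \ref{fn}, $f_1$ is the largest weight occurring in $\R^{p,q}$, so adding any positive root to $f_1$ produces a functional that is not a weight of $\R^{p,q}$. Hence $\fg_\lambda\cdot w_1=0$ for every $\lambda\in\Sigma^+$, i.e. $\fn\,w_1=0$, and therefore $\N$ fixes $w_1=e_p-e_{p+1}$. (Alternatively this can be read off directly from the relations $(\ref{N-equation})$: a short check gives $Xw_1=0$ for every $X\in\fn$.) Consequently $\ell(x):=\langle x,w_1\rangle$ satisfies $\ell(gx)=\langle x,g^{-1}w_1\rangle=\langle x,w_1\rangle=\ell(x)$ for all $g\in\N$, so $\ell$ is $\N$-invariant.

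Next I would check functional independence and conclude. We have $d(q_0)_x=2\langle x,\cdot\,\rangle$ and $d\ell_x=\langle w_1,\cdot\,\rangle$; by nondegeneracy of $\langle\cdot,\cdot\rangle$ these covectors are proportional only when $x\in\R w_1$. Thus on the open dense set $U=\R^{p,q}\setminus\R w_1$ the differentials are linearly independent, and any orbit meeting $U$ is contained in the codimension-two submanifold $\{q_0=a\}\cap\{\ell=b\}$, whence its tangent space $\fn\,x$ has dimension at most $(p+q)-2$. Since the set of principal (maximal-dimensional) orbits is also open and dense, it meets $U$, and at such a point the orbit dimension is maximal and at most $(p+q)-2$. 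Therefore the maximal orbit dimension is at most $p+q-2$, the cohomogeneity is at least two, and the action of $\N$ is not of cohomogeneity one.

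I expect the main obstacle to be the identification of the second invariant $\ell$, namely recognizing that $w_1$ is the highest-weight vector of $\R^{p,q}$ and is therefore annihilated by all of $\fn$; once $\ell$ is available, the independence check and the codimension count are routine. I note that the same weight bookkeeping can be pushed further to show that $\fn\,x$ generically fills the whole codimension-two tangent space, so the action is in fact of cohomogeneity exactly two, as asserted in the Introduction, although only the upper bound is needed for the present statement.
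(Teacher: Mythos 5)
Your proof is correct, but it proceeds by a genuinely different route than the paper. The paper attacks Theorem \ref{N} head-on: for each nonzero $x$ it writes out the linear system $Xx=0$ for $X\in\fn$ (using the explicit description of $\fn$ from Proposition \ref{fn}), splits into three cases according to whether $x$ lies off $\bigcap_i\Pi_i$, in $\bigcap_i\Pi_i\setminus\bigcap_j\mathscr{P}_j$, or in $\bigcap_i\Pi_i\cap\bigcap_j\mathscr{P}_j$, and computes $\dim\fn_x$ exactly in every case, obtaining the precise orbit dimensions $p+q-(k+1)$, $q$, and $l-1$ respectively. You instead bound all orbit dimensions from above by exhibiting two independent invariants: $\langle x,x\rangle$ (automatic, since $\N\subseteq SO_\circ(p,q)$) and $\ell(x)=\langle x,w_1\rangle$, whose invariance rests on the key fact $\fn\,w_1=0$. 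That fact is true, and your weight argument for it is sound: $w_1$ spans the $f_1$-weight space of the defining representation restricted to $\fa$ (this is the content of the formulas in Remark \ref{rem1}, where the paper itself records $Xw_1=0$ and ${\bf N}(w_1)=\{w_1\}$), and $f_1+\lambda$ is not a weight of $\R^{p,q}$ for any $\lambda\in\Sigma^+$; one small slip is that this is the defining (standard) action of $\fa$ on $\R^{p,q}$, not the adjoint action. The independence of $dq_0$ and $d\ell$ off $\R w_1$, and the conclusion that every orbit has dimension at most $p+q-2$ (points of $\R w_1$ being fixed points), are routine and correct; the detour through density of principal orbits is harmless, since openness plus nonemptiness of the maximal-rank locus already forces it to meet the dense set $U$. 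What each approach buys: yours is much shorter and conceptual, and immediately generalizes to any subgroup fixing a vector; the paper's brute-force computation is heavier but yields the exact orbit dimensions, which is what justifies the introduction's claim that the action has cohomogeneity exactly two and which is reused later (the proof of Corollary \ref{co-S^{p-1,q}} cites case (2) of the proof of Theorem \ref{N} to identify the $\N$-orbits on the cylinder $C^{p-1}_q(r)$). So your argument fully proves the stated theorem, but it could not simply replace the paper's proof without supplying those exact dimension counts elsewhere.
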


\begin{proof}
Let $x=\sum^{p+q}_{i=1}x^ie_i$ be an arbitrary fixed nonzero element of $\R^{p,q}$. We show that $\dim {\bf N} (x) \leqslant p+q-2$. 

Let $\fn_x$ be the Lie subalgebra of $\fn$ corresponding to the stabilizer subgroup at $x$, say ${\bf N}_x$. Hence
\begin{eqnarray*}
\fn_x &=&\{X\in \fn |\ \exp(tX)x=x,\ \forall t\in \R\} 
\\
 &=& \{X\in \fn |\ Xx=0\}.
\end{eqnarray*}
Let $X$ be a typical element of $\fn$. Then by Proposition \ref{fn}, the equation $Xx=0$ becomes as follows.

\begin{eqnarray}\label{equ-n}
\begin{cases}
\sum_{i=p-q+1}^{p}A_{ji}(x^{i}+x^{2p-i+1})=0 \ , \quad 1\leqslant j\leqslant p-q,\quad  {\rm if}\ p\neq q,\\

-\sum_{i=1}^{p-q}A_{i,p-q+1} x^{i} + \sum_{i=p-q+2}^{p} A_{p-q+1, i}(x^{i}+x^{2p-i+1})=0 \\
-\sum_{i=1}^{p-q+1}A_{i,p-q+2} x^{i} + \sum_{i=p-q+3}^{p}A_{p-q+2, i}(x^{i}+x^{2p-i+1})-D_{q-1,q}x^{p+q}=0\\
-\sum_{i=1}^{p-q+2}A_{i,p-q+3} x^{i} + \sum_{i=p-q+4}^{p}A_{p-q+3, i}(x^{i}+x^{2p-i+1})-\sum_{i=p-q+1}^{p-q+2}D_{q-2,p-i+1}x^{2p-i+1}=0\\
 \vdots \\
-\sum_{i=1}^{p-2}A_{i,p-1} x^{i} + A_{p-1, p}(x^{p}+x^{p+1})-\sum_{i=p-q+1}^{p-2}D_{2,p-i+1}x^{2p-i+1 }=0\\
\sum_{i=1}^{p-1}A_{i,p} x^{i} +\sum_{i=p-q+1}^{p-1}D_{1,p-i+1 }x^{2p-i+1 }=0\\
\sum_{i=1}^{p-2}A_{i,p-1} x^{i} - D_{1, 2}(x^{p}+x^{p+1})+\sum_{i=p-q+1}^{p-2}D_{2,p-i+1}x^{2p-i+1 }=0\\ \vdots \\
\sum_{i=1}^{p-q+2}A_{i,p-q+3} x^{i} - \sum_{i=p-q+4}^{p}D_{p-i+1, q-2}(x^{i}+x^{2p-i+1})+\sum_{i=p-q+1}^{p-q+2}D_{q-2,p-i+1}x^{2p-i+1}=0\\
\sum_{i=1}^{p-q+1}A_{i,p-q+2} x^{i} - \sum_{i=p-q+3}^{p}D_{p-i+1, q-1}(x^{i}+x^{2p-i+1})+D_{q-1,q}x^{p+q}=0\\ 
\sum_{i=1}^{p-q}A_{i,p-q+1} x^{i} - \sum_{i=p-q+2}^{p} D_{p-i+1, q}(x^{i}+x^{2p-i+1})=0 \\
\end{cases}
\end{eqnarray}
We look for the conditions on the point $x$ on which $\fn_x$ has minimum dimension.  One gets the result by considering the following three cases.

{\bf Case 1 :} $x\notin \bigcap_{i=1}^{q} \Pi_i$.

{\it Claim 1:} In this case $\dim {\bf N}(x)=p+q-(k+1)$, where $ k= min\{i: 1\leq i\leq q\ \ and\ \  x\notin \Pi_i\} $.

{\it Proof of Claim 1:}
For simplicity, first assume that $k=1$, i.e. $x^p+x^{p+1}\neq 0$. Then for $ 1\leq j\leq p-q $, if $p\neq q$, we have
$$ A_{jp}=-\frac{1}{x^{p}+x^{p+1}}\sum_{i=p-q+1}^{p-1} (x^{i}+x^{2p-i+1}) A_{ji}, $$
and for $ p-q+1\leq j\leq p-1 $ we have
$$ A_{jp}=\frac{1}{x^{p}+x^{p+1}}\lbrace -\sum_{i=j+1}^{p-1} (x^{i}+x^{2p-i+1}) A_{ji} + \sum_{i=1}^{j-1}\lbrace x^{i}A_{ij}+ x^{2p-i+1}D_{p-j+1,p-i+1}\rbrace \rbrace, $$
and
$$ D_{1,p-j+1}=\frac{1}{x^{p}+x^{p+1}}\lbrace -\sum_{i=j+1}^{p-1} (x^{i}+x^{2p-i+1}) D_{p-i+1,p-j+1} + \sum_{i=1}^{j-1}\lbrace x^{i}A_{ij}+x^{2p-i+1}D_{p-j+1,p-i+1}\rbrace \rbrace ,$$ 
where $x_k=0$ for any $k>p+q$
(note that $D_{ij}=0$ for $i,j\geq q+1$). 
 These imply that $ A_{jp}$ and $D_{1,p-i+1} $ are linear functions of the other entries, where $ 1\leq j\leq p-1$ and $p-q+1\leqslant i\leqslant p-1$. Therefore $\dim \fn_{x}=q(p-1)-(p-1)-(q-1)=(p-2)(q-1),$ which implies that $\dim {\bf N}(x)=\dim \fn - \dim\fn_x=p+q-2$. 
 
 Now let $1<k< q$. This implies that $x^{p-k+1}+x^{p+k}\neq 0$ and $x^{p-i+1}+x^{p+i}=0$ for any $i$ where $1\leqslant i< k$. Then for $1\leq j\leq p-q$, if $p\neq q$, we have
 $$ A_{j,p-k+1}=-\frac{1}{x^{p-k+1}+x^{p+k}}\sum_{i=p-q+1,i\neq p-k+1}^{p} (x^{i}+x^{2p-i+1}) A_{ji}, $$
 for $p-q+1\leq j\leq p-k$ we have
 $$ A_{j,p-k+1}=\frac{1}{x^{p-k+1}+x^{p+k}}\lbrace -\sum_{\stackrel{i=j+1}{ i\neq p-k+1}}^{p} (x^{i}+x^{2p-i+1}) A_{ji} + \sum_{i=1}^{j-1}\lbrace x^{i}A_{ij}+x^{2p-i+1}D_{p-j+1,p-i+1}\rbrace \rbrace, $$
 and
 $$ D_{k,p-j+1}=\frac{1}{x^{p-k+1}+x^{p+k}}\lbrace-\sum_{\stackrel{i=j+1}{ i\neq p-k+1}}^{p}(x^{i}+x^{2p-i+1}) D_{p-i+1,p-j+1} + \sum_{i=1}^{j-1}\lbrace x^{i}A_{ij}+x^{2p-i+1}D_{p-j+1,p-i+1}\rbrace \rbrace ,$$ 
 and for $p-k+2\leq j\leq p$, if $x^{p-k+1}\neq 0$ we have either
 $$ A_{p-k+1,j}=-\frac{1}{x^{p-k+1}} \sum_{i=1, i\neq p-k+1}^{j-1}\lbrace x^{i}A_{ij}+x^{2p-i+1}D_{p-j+1,p-i+1}\rbrace, $$
 or
  $$ D_{p-j+1,k}=-\frac{1}{x^{p+k}} \sum_{i=1,i\neq p-k+1}^{j-1}\lbrace x^{i}A_{ij}+x^{2p-i+1}D_{p-j+1,p-i+1}\rbrace .$$
  These imply that $A_{j,p-k+1}$ and $ D_{k,p-j+l},$ where $1\leq j\leq p-k$ and $p-q+1\leq j\leq p-k$ and one of $A_{p-k+1,j}$ or $D_{p-j+1,k},$ where $p-k+2\leq j\leq p$ are linear functions of the other entries. Therefore $\dim \fn_{x}=q(p-1)-(p+q-(k+1)),$ which implies that $\dim {\bf N}(x)=\dim \fn - \dim\fn_x=p+q-(k+1)$.
  
  Finally, let $k=q$. Then for $1\leq j\leq p-q$, if $p\neq q,$ we have
  $$ A_{j,p-q+1}=-\frac{1}{x^{p-q+1}+x^{p+q}}\sum_{i=p-q+2}^{p} (x^{i}+x^{2p-i+1}) A_{ji}, $$
  
  and for $p-q+2\leq j\leq p$, if $x^{p-q+1}\neq 0$ we have either
  $$ A_{p-q+1,j}=-\frac{1}{x^{p-q+1}} \sum_{i=1, i\neq p-q+1}^{j-1}\lbrace x^{i}A_{ij}+x^{2p-i+1}D_{p-j+1,p-i+1}\rbrace \rbrace, $$
  or
  $$ D_{p-j+1,q}=-\frac{1}{x^{p+q}} \sum_{i=1,i\neq p-q+1}^{j-1}\lbrace x^{i}A_{ij}+x^{2p-i+1}D_{p-j+1,p-i+1}\rbrace ,$$
  These imply that $A_{j,p-q+1}$ where $1\leq j\leq p-q$ and one of the $A_{p-q+1,j}$ or $D_{p-j+1,q},$ where $p-q+2\leq j\leq p$ are linear functions of the other entries. Therefore $\dim \fn_{x}=q(p-1)-(p-1),$ which implies that $\dim {\bf N}(x)=\dim \fn - \dim\fn_x=p-1$.

{\bf Case 2:} $x\in \bigcap_{i=1}^q\Pi_i\diagdown \bigcap_{j=1}^{p-q}\mathscr{P}_j$.
 \\
{\it Claim 2:} In this case $\dim {\bf N}(x)=q$.

{\it Proof of Claim 2:} The condition on $x$ implies that $x^{p-i+1}+x^{p+i}=0$ for all $1\leq i\leq q$ and $x^{j}\neq 0 $  for some $1\leq j\leq p-q.$ Then the system of equations (\ref{equ-n}) reduces to the following system.

\begin{eqnarray}\label{case2-n}
\begin{cases}
\sum_{i=1}^{p-1}A_{i,p} x^{i} +\sum_{i=p-q+1}^{p-1}D_{1,p-i+1 }x^{2p-i+1 }=0\\
\sum_{i=1}^{p-2}A_{i,p-1} x^{i})+\sum_{i=p-q+1}^{p-2}D_{2,p-i+1}x^{2p-i+1 }=0\\ \vdots \\
\sum_{i=1}^{p-q+2}A_{i,p-q+3} x^{i} +\sum_{p-q+1}^{p-q+2}D_{q-2,p-i+1}x^{2p-i+1}=0\\
\sum_{i=1}^{p-q+1}A_{i,p-q+2} x^{i} +D_{q-1,q}x^{p+q}=0\\ 
\sum_{i=1}^{p-q}A_{i,p-q+1} x^{i} =0. \\
\end{cases}
\end{eqnarray}
Hence for $p-q+1\leq m\leq p$ we have 
$$ A_{jm}=-\frac{1}{x^{j}}\lbrace \sum_{i=1, i\neq j}^{m-1} x^{i} A_{im} + \sum_{i=p-q+1}^{m-1} x^{2p-i+1}D_{p-m-1,p-i+1}\rbrace, $$
These imply that $ A_{jm}$ are linear functions of the other entries, where $ p-q+1\leq m\leq p.$ Therefore $\dim \fn_{x}=q(p-1)-q=q(p-2),$ which implies that $\dim {\bf N}(x)=\dim \fn - \dim\fn_x=q$.

{\bf Case 3 :} $x\in \bigcap_{i=1}^q\Pi_i\cap \bigcap_{j=1}^{p-q}\mathscr{P}_j$.\\
Hence, $ x^{i}=0$ for all $1\leq i\leq p-q,$ $x^{p-i+1}+x^{p+i}=0 $ for all $1\leq i\leq q$ and consequently $x^{p+j}=-x^{p-j+1}\neq 0 $  for some $ 1\leq j\leq q $. \\
{\it Claim 3:} In this case $\dim {\bf N}(x)=l-1$, where $ l=max\{j: 1\leq j\leq q \quad\textit{and}\quad x^{p+j}=-x^{p-j+1}\neq 0 \}.$

{\it Proof of Claim 3:} Let $x^{p-l+1}=-x^{p+l}\neq 0$ where $ l=max\{j: 1\leq j\leq q \quad\textit{and}\quad x^{p+j}=-x^{p-j+1}\neq 0 \}.$ If $l=1$, then $x=r(e_p-e_{p+1})$ for some $r\in \R$, which implies that ${\bf N}(x)=\{x\}$ and so $\dim {\bf N}(x)=0$. If $l>1$, then the system of equations (\ref{equ-n}) reduces to the following system.
\begin{eqnarray}\label{case3-n}
\begin{cases}
\sum_{i=p-l+1}^{p-1}\lbrace A_{i,p} x^{i} + D_{1,p-i+1 }x^{2p-i+1 }\rbrace=0\\
\vdots \\
\sum_{i=p-l+1}^{p-l+1}\lbrace A_{i,p-l+2} x^{i} + D_{l-1,p-i+1}x^{2p-i+1}\rbrace=0.
\end{cases}
\end{eqnarray}
Hence for $1\leq m\leq l-1$ we have
$$ D_{ml}=-\frac{1}{x^{p+l}}\lbrace \sum_{i=p-q+1}^{p-m} x^{i} A_{i,p-m+1} + \sum_{i=m+1,i\neq l}^{q} x^{p+i}D_{mi}\rbrace.$$
These imply that 
$ D_{ml}$ are linear functions of the other entries, where $ 1\leq m\leq l-1.$ Therefore $\dim \fn_{x}=q(p-1)-(l-1),$ and so $\dim {\bf N}(x)=\dim \fn - \dim\fn_x=l-1$.
\end{proof}

\begin{rem}\label{rem1}
In the following ordered basis of $\R^{p+q}$ the matrix representation of any element of $ \mathfrak{n} $ is a strictly upper triangular matrix. This implies that each element of $\fn$ is a nilpotent matrix.
\begin{equation}\label{basis}
( e_{p}-e_{p+1}, e_{p-1}-e_{p+2}, \cdots , e_{p-q+1}-e_{p+q}, e_{1}, \cdots, e_{q}, e_{p-q+1}+e_{p+q},\cdots, e_{p}+e_{p+1} ). 
\end{equation}
For the case $ p=q $, the vectors $e_{i}$, where $1\leq i\leq q$, are discarded from the basis (we remind that $w_i=e_{p-i+1}-e_{p+i}$, where $1\leqslant i\leqslant q$).

The representation of any element $X\in \mathfrak{n} $ in the basis (\ref{basis}) for $\R^{p,q}$ is determined as follows.\\
\begin{eqnarray}
\begin{cases}
X(e_p-e_{p+1})=0,\\
X(e_{p-j+1}-e_{p+j})=\sum_{i=1}^{j-1}(D_{i,j}-A_{p-j+1,p-i+1})(e_{p-i+1}-e_{p+i});\quad 2\leq j\leq q\\
X(e_{j})=-\sum_{i=1}^{q} A_{j,p-i+1}(e_{p-i+1}-e_{p+i});\quad 1\leq j\leq q\\
X(e_{p-k}+e_{p+k+1})=-\sum_{i=1}^{k}(D_{i,k+1}+A_{p-k,p-i+1})(e_{p-i+1}-e_{p+i})+2\sum_{i=1}^{p-q}A_{i,p-k} e_{i} \ , \quad k= q-j\\
+ \sum_{i=1}^{j-1}\left\lbrace (A_{p-l,p-k}+D_{k+1,l+1})(e_{p-l}-e_{p+l+1})+(A_{p-l,p-k}
+D_{k+1,l+1})(e_{p-l}+e_{p+l+1})\right\rbrace \ , \quad l= q-i
\end{cases}
\end{eqnarray}
These imply that ${\bf N}(w_1)=\{w_1\}$ and ${\bf N}(w_j)=\Sigma_{i=1}^{j-1}\R w_i+w_j$, where $2\leqslant j\leqslant q$. Also ${\bf N}(e_i)=e_i+\Sigma_{i=1}^q\R w_i$.  
And the representation of any element $Y\in \mathfrak{a} $ is determined as follows.
\begin{eqnarray}
\begin{cases}
Y(e_{p-(i-1)}-e_{p+i} )=-c_{i} (e_{p-(i-1) }-e_{p+i} )\\
Y(e_{i })=0\\
Y(e_{p-(q-i) }+e_{p+(q-i)+1} )=c_{(q-i)+1} (e_{p-(q-i) }+e_{p+(q-i)+1} ),
\end{cases}
\end{eqnarray}
where $1\leq i\leq q$. Hence for any $i$ where $1\leqslant i\leqslant q$, we have ${\bf A}(w_i)=\R_+w_i$, ${\bf A}(e_i)=\{e_i\}$ and ${\bf K}_0(w_i)=\{w_i\}$. Thus ${\bf K}_0{\bf A}{\bf N}(\cap_{i=1}^j\Pi_i)\subseteq \cap_{i=1}^j\Pi_i$, where $1\leqslant j\leqslant q$. This fact is a key point in determining of the orbits of the group $Q={\bf K}_0\ \A\N$ in the hyperquadrics in $\R^{p,q}$ (see the proofs of Corollaries \ref{co-Lambda} and \ref{co-H^{p,q-1}}).

\end{rem}

\begin{pro}
      The direction $\R(e_{p}-e_{p+1})$ is the only direction in $\R^{p,q}$ which is preserved by $\N$. This direction is fixed point-wise by $\N$.
\end{pro}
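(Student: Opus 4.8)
The plan is to reduce the statement to the orbit-dimension computation already carried out in the proof of Theorem \ref{N}, using one extra observation about unipotency. First I would dispose of the easy half. By Remark \ref{rem1} every $X\in\fn$ satisfies $X(e_p-e_{p+1})=0$, hence $\exp(tX)(e_p-e_{p+1})=e_p-e_{p+1}$ for all $t\in\R$. Since $\N$ is connected and generated by $\exp(\fn)$, every element of $\N$ fixes the vector $e_p-e_{p+1}$, and therefore fixes each point of the line $\R(e_p-e_{p+1})$. This simultaneously shows that the direction $\R(e_p-e_{p+1})$ is preserved by $\N$ and that it is fixed point-wise.

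For uniqueness, I would argue as follows. Suppose $\R v$ is a direction preserved by $\N$, with $v\neq 0$; then for each $g\in\N$ we have $gv=\lambda(g)v$ for some nonzero scalar $\lambda(g)$. By Remark \ref{rem1} every element of $\fn$ is a nilpotent matrix, so each $g=\exp(X)\in\N$ is unipotent and its characteristic polynomial is $(\mu-1)^{p+q}$; in particular $1$ is the only eigenvalue of $g$. Since $\lambda(g)$ is then a (real) eigenvalue of $g$, we must have $\lambda(g)=1$, so $gv=v$ for every $g\in\N$. Thus $\N(v)=\{v\}$ and $\dim\N(v)=0$.

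It then remains to identify the nonzero vectors whose $\N$-orbit is a single point, and here I would read the answer off directly from the three cases in the proof of Theorem \ref{N}. In Case 1 one has $\dim\N(v)=p+q-(k+1)\geq p-1$, which is positive because $\N$ being nontrivial forces $p\geq 2$; in Case 2 one has $\dim\N(v)=q>0$; and in Case 3 one has $\dim\N(v)=l-1$, which vanishes precisely when $l=1$, i.e.\ exactly when $v\in\R(e_p-e_{p+1})$. Hence $\dim\N(v)=0$ forces $v\in\R(e_p-e_{p+1})$, so $\R v=\R(e_p-e_{p+1})$, and the direction is unique.

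Given the earlier results, the argument is essentially routine; the only genuinely new ingredient is the unipotency step that upgrades \emph{direction preserved} to \emph{vector fixed}, and I would flag that (together with correctly isolating Case 3 with $l=1$ as the sole source of a pointwise orbit) as the crux. An alternative self-contained route would bypass Theorem \ref{N} entirely and instead solve $Xv=0$ simultaneously over all $X\in\fn$ using the explicit action formulas of Remark \ref{rem1}: expanding $v$ in the basis (\ref{basis}) and letting the free entries of $X$ vary would force every coordinate of $v$ other than the $e_p-e_{p+1}$ component to vanish. This works but is more computational, so I would prefer the orbit-dimension route above.
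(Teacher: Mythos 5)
Your proof is correct, but it takes a route that differs from the paper's in both halves, so a comparison is worthwhile. The central idea is shared: nilpotency forces a merely \emph{preserved} direction to be genuinely \emph{fixed}. You implement this at the group level (every $g\in\N$ is unipotent, so the scalar $\lambda(g)$ in $gv=\lambda(g)v$ is a real eigenvalue of a unipotent matrix and hence equals $1$), whereas the paper works infinitesimally: it writes $\exp(tX)x=\lambda_X(t)x$, differentiates at $t=0$ to get $Xx=\lambda_X'(0)x$, and concludes $\lambda_X'(0)=0$ because a nilpotent $X$ has no nonzero eigenvalue. Your version is marginally more robust, since it never needs the differentiability of $\lambda_X$, which the paper uses without comment. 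The concluding steps are genuinely different: the paper deduces from $Xx=0$ for all $X\in\fn$, via the explicit action formulas of Remark \ref{rem1}, that $x\in\R(e_p-e_{p+1})$; you instead note that a fixed vector has a zero-dimensional orbit and read off from the three cases in the proof of Theorem \ref{N} that dimension zero occurs only in Case 3 with $l=1$, i.e.\ only on $\R(e_p-e_{p+1})$. This reuse of Theorem \ref{N} is legitimate (it precedes the proposition) and spares you from re-solving the linear system $Xx=0$, at the price of importing the heavier case analysis; the paper's argument is more self-contained, resting only on Remark \ref{rem1}. Finally, your observation that nontriviality of $\N$ forces $p\geqslant 2$, so that the Case 1 and Case 2 dimensions are strictly positive, addresses a point the paper leaves implicit (for $p=q=1$ the group $\N$ is trivial and the uniqueness assertion would fail), and is worth flagging as you did.
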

\begin{proof}
 Let $\R x$ be a direction which is preserved by $\N$. So for any $X\in \fn$ there is a differentiable function $\lambda_X:\R\rightarrow \R$ such that $$\exp (tX)x=\lambda_X(t)x\ ,\quad \forall t\in \R.$$
This implies that $Xx=\frac{d}{dt}|_{t=0}\lambda_X(t)x$. Hence $\frac{d}{dt}|_{t=0}\lambda_X(t)$ is an eigenvalue of $X$ and so $\frac{d}{dt}|_{t=0}\lambda_X(t)=0$ by Remark \ref{rem1}. Thus $$Xx=0,\quad \forall X\in\fn.$$
By Remark \ref{rem1} it is obvious that $x\in \R(e_p-e_{p+1})$.
\end{proof}
\section{The action of $SO_\circ(p,q)$ on $\R^{p,q}$}

In this section we study the orbits of the action of the Lie subgroup $SO_\circ(p,q)$ of $Iso(\R^{p,q} )$ on $\R^{p,q}$. We first introduce some notations:
\begin{eqnarray}\label{notations}
\mathcal{S}^{p,q}=\{v\in \R^{p,q}:\langle v,v\rangle >0\},\quad \quad \mathcal{T}^{p,q}=\{v\in \R^{p,q}:\langle v,v\rangle <0\},\\
\Lambda^{p+q-1} = \{v\in \R^{p,q}-\{0\}:\langle v,v\rangle =0\},\\
\Lambda^{p}_{+}=\{v\in \Lambda^p: \langle v,e_{p+1}\rangle <0 \},\\
\Lambda^{p}_{-}=\{v\in \Lambda^p: \langle v,e_{p+1}\rangle >0\}
\end{eqnarray}
$ \mathcal{S}^{p,q}, \mathcal{T}^{p,q} $ and $ \Lambda^{p+q-1} $ are the set of space-like, time-like and light-like vectors in $ \R^{p,q},$ respectively.
By proposition 4.22 of \cite{ON}, the nullcone $ \Lambda^{p+q-1} $ of $ \R^{p,q} $ is a hypersurface invariant under scalar multiplication and diffeomorphic to $ (\R^{q}-\{0\})\times \mathbb{S}^{p-1}$, where $\mathbb{S}^{p-1}$ is the $p-1$ dimensional Euclidean sphere with redius $1$ in $\R^{p}$. Hence $\Lambda^{p+q-1}$ is connected if and only if $q>1$. For $q=1$, the index $\pm$ in $(13)$ and $(14)$ refers to time-orientation.

For $r\in \R_{+}$ we define
\begin{eqnarray}
\mathbb{S}^{p-1,q}(r)=\{v\in \mathcal{S}^{p,q}:\langle v,v\rangle =r^{2}\}\quad, \quad \mathbb{H}^{p,q-1}(r)=\{v\in \mathcal{T}^{p,q}:\langle v,v\rangle =-r^{2}\}.
\end{eqnarray}

The pseudo-Riemannian submanifolds $\mathbb{S}^{p-1,q}(r)$ and  $\mathbb{H}^{p,q-1}(r)$ are called $p+q-1$ dimensional pseudo-sphere and pseudo-hyperbolic space of radius $r$, respectively (see \cite[Ch.4]{ON}). The pseudo-sphere $\mathbb{S}^{p-1,q}(r)$ is diffeomorphic to $\mathbb{S}^{p-1}\times\R^q $ and the pseudo-hyperbolic space $\mathbb{H}^{p,q-1}(r)$ is diffeomorphic to $\R^p\times \mathbb{S}^{q-1}.$ Hence  $\mathbb{H}^{p,q-1}(r)$ is connected if and only if $q>1$. If $q=1$, then $\mathbb{H}^{p,0}(r)=\mathbb{H}^{p,0}_+(r)\cup \mathbb{H}^{p,0}_-(r)$, where 
$$\mathbb{H}^{p,0}_+(r)=\{v\in \R^{p,1}\ :\langle v,v\rangle=-r^2 \ {\rm and}\ \langle v,e_{p+1}\rangle >0\},$$ and
$$\mathbb{H}^{p,0}_-(r)=\{v\in \R^{p,1}\ :\langle v,v\rangle=-r^2 \ {\rm and}\ \langle v,e_{p+1}\rangle <0\}.$$
The induced metric on $\mathbb{H}^{p,0}_+(r)$ is Riemannian and $\mathbb{H}^{p,0}_+(r)$ is the well-known hyperboloid model of $p$ dimensional real hyperbolic space with constant curvature $-r^{-2}$. We have $Iso(\mathbb{H}^{p,0}_+(r))=SO(p,1)$. In particular, $\mathbb{H}^{p,0}_+(r)$ is an orbit of $SO_\circ(p,1)$ and the isotropy subgroup at a point of $\mathbb{H}^{p,0}_+(r)$ is isomorphic to ${\bf K}=SO(p)$. Hence, as a homogeneous space, $\mathbb{H}^{p,0}_+(r)= SO_\circ(p,1)/ SO(p)=SO_\circ(p,1)/ \K$. Using the time-reversing map $\R^{p,1}\rightarrow \R^{p,1}$, $(x^1,...,x^p, x^{p+1})\mapsto (x^1,...,x^p , -x^{p+1})$, one gets that $\mathbb{H}^{p,0}_-(r)$ is another orbit of $SO_\circ(p,1)$ and therefore $\mathbb{H}^{p,0}_-(r)=SO_\circ(p,1)/SO(p)=SO_\circ(p,1)/\K$.

 Let $q>1$. Then $Iso(\mathbb{S}^{p-1,q}(r))=Iso(\mathbb{H}^{p,q-1}(r))=O(p,q)$ (see \cite[p.239]{ON}). In particular, $\mathbb{S}^{p-1,q}(r)$ and $\mathbb{H}^{p,q-1}(r)$ are orbits of $SO_\circ(p,q)$. The isotropy subgroup at a point of $\mathbb{S}^{p-1,q}(r)$ is isomorphic to $SO_\circ(p-1,q)$ and that of  $\mathbb{H}^{p,q-1}(r)$ is isomorphic to $SO_\circ(p,q-1)$. Thus, as homogeneous spaces, $\mathbb{S}^{p-1,q}(r)=SO_\circ (p,q)/ SO_\circ (p-1,q)$ and $\mathbb{H}^{p,q-1}(r)=SO_\circ (p,q)/ SO_\circ (p,q-1)$.  

Finally, $SO_\circ (p,q)$ leaves $\Lambda^{p+q-1}$ invariant. If $q=1$, then
$\Lambda^{p}_{+}$ and $\Lambda^{p}_{-}$ are the orbits. The isotropy group of $SO_\circ(p,1)$ at a point in $\Lambda^{p}_+$ or $\Lambda^{p}_{-}$ is isomorphic to the subgroup ${\bf K}_0\N$ of $SO_\circ(p,1)$. Thus, as homogeneous spaces, we have $\Lambda^{p}_{-}=SO_\circ(p,1)/ {\bf K}_0\N$ and $\Lambda^{p}_{+}=SO_\circ(p,1)/{\bf K}_0\N$. In the case that $q>1$, we have $\Lambda^{p+q-1}=SO_\circ (p,q)/ {\bf K}_0{\bf N}$.

Altogether it follows that we have the following decomposition $\mathcal{F}_{SO_\circ (p,q)}$ of $\R^{p,q}$ into orbits of $SO_\circ(p,q)$.

$$\mathcal{F}_{SO_\circ(p,1)}=\{0\}\cup \Lambda^{p}_{\pm}\cup \bigcup_{r\in \R_+}\mathbb{H}^{p,0}_{\pm}(r)\cup \bigcup_{r\in \R_+}\mathbb{S}^{p-1,1}(r),$$
and
$$\mathcal{F}_{SO_\circ(p,q)}=\{0\}\cup \Lambda^{p+q-1}\cup \bigcup_{r\in \R_+}\mathbb{H}^{p,q-1}(r)\cup \bigcup_{r\in \R_+}\mathbb{S}^{p-1,q}(r) \ ,\ {\rm where}\ q>1.$$

\section{Cohomogeneity one actions of Lie subgroups of ${\bf K}_{\circ}{\bf AN}$ on $\R^{p,q}$}

Let $G={\bf K}'\A\N$ and ${\bf K}' \subseteq {\bf K}_0$. Consider the natural action of $G$ on $\R^{p,q}$. For the case $q=1$, in \cite[Sec.4]{BDV} it is claimed that $G$ has exactly two orbits on the positive light cone $\Lambda^{p}_+$, namely $\R_-w_1$ and $\Lambda^{p}_+- \R_-w_1$.  The argument is analogous for the negative light cone $\Lambda^{p}_-$.   In other words, $G$ has two orbits with dimension $p-1$ and two orbits with dimension one. Also $G$ acts transitively on each connected component of $\mathbb{H}^{p,0}$, i.e. $G$ has two orbits with dimension $p$ in $\mathbb{H}^{p,0}$. Here, we generalize these consequences for general positive $q$ (see Corollaries \ref{co-Lambda} and \ref{co-H^{p,q-1}}).

The special Euclidean group ${\bf  K}_{0}{\bf N} $ fixes the vector $ w_{1}=e_{p}-e_{p+1}\in \Lambda^{p+q-1}$ and for Lie subgroup $ Q={\bf K}_{0}{\bf A}{\bf N} $ of $ SO(p,q) $ the orbit $ Q(w_{1}) $ is equal to  $Q(w_{1})={\bf A}(w_{1})=\R_{+}w_{1}=\R w_{1}\cap \Lambda^{p+q-1}$. Similarly, we have $ Q(-w_{1})={\bf A}(-w_{1})=\R_{-}w_{1}=\R w_{1}\cap \Lambda^{p+q-1}.$

\begin{pro}\label{KAN}
	
	Let $G={\bf K}_{0}{\bf AN}$, which acts on $ \R^{p,q} $ naturally. Let $x=\sum_{i=1}^{p+q} x^{i}e_{i} $ be a nonzero element of $\R^{p,q}$ and $\dim G(x)=m$. Then one of the following statements hold.
	
	a) If $x\notin \bigcap_{i=1}^q\Pi_i$ then $m=p+q-k$, where $ k= min\{i: 1\leq i\leq q\ \ and\ \  x^{p-i+1}+x^{p+i}\neq 0\} $. 
	
	b) If $x\in \bigcap_{i=1}^q\Pi_i\diagdown \bigcap_{j=1}^{p-q}\mathscr{P}_j$ then $m=p-1$.
	
	c) If $x\in \bigcap_{i=1}^q\Pi_i\cap \bigcap_{j=1}^{p-q}\mathscr{P}_j$, then $m=max\{j: 1\leq j\leq q \quad\textit{and}\quad x^{p+j}=-x^{p-j+1}\neq 0 \}.$
\end{pro}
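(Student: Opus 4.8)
The plan is to realise $\dim G(x)$ as $\dim(\fg\cdot x)$, where $\fg=\fk_0\oplus\fa\oplus\fn$ and $\fg\cdot x:=\{Xx:X\in\fg\}=\fn\cdot x+\fa\cdot x+\fk_0\cdot x$ is the tangent space to the orbit at $x$. Since the three cases (a)--(c) of the statement match the three cases of Theorem \ref{N} verbatim, I already know $\dim(\fn\cdot x)$ in each, and the whole problem reduces to measuring by how much the $\fa$- and $\fk_0$-directions enlarge $\fn\cdot x$. To organise this I would work in the basis (\ref{basis}), writing $x=\sum_i\alpha_i w_i+\sum_j\beta_j e_j+\sum_i\gamma_i\tilde e_i$ with $\tilde e_i=e_{p-i+1}+e_{p+i}$, so that $\alpha_i=\tfrac12(x^{p-i+1}-x^{p+i})$, $\beta_j=x^{j}$, $\gamma_i=\tfrac12(x^{p-i+1}+x^{p+i})$; then $x\in\Pi_i\iff\gamma_i=0$ and $x\in\mathscr P_j\iff\beta_j=0$, so the trichotomy becomes ``some $\gamma_i\neq0$'', ``all $\gamma_i=0$ but some $\beta_j\neq0$'', ``$x\in V$''. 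From Remark \ref{rem1} I read off $\fa\cdot x=\mathrm{span}\{-\alpha_i w_i+\gamma_i\tilde e_i:1\leq i\leq q\}$ and, since $\fk_0\cong\fs(p-q)$ acts only on $E=\mathrm{span}(e_1,\dots,e_{p-q})$, that $\fk_0\cdot x$ is the tangent space to the sphere through $(\beta_1,\dots,\beta_{p-q})$, of dimension $p-q-1$ when this vector is nonzero and $0$ otherwise.

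Cases (b) and (c) should then be immediate. In (c) we have $x\in V$, and Remark \ref{rem1} gives $\fn\cdot w_j\subseteq V_{j-1}:=\mathrm{span}(w_1,\dots,w_{j-1})$, so $\fn\cdot x\subseteq V_{m-1}$; as Theorem \ref{N} yields $\dim(\fn\cdot x)=m-1$ this forces $\fn\cdot x=V_{m-1}$, while $\fa\cdot x$ supplies the single extra direction $w_m$ (here $\beta=\gamma=0$, so $\fk_0\cdot x=0$ and the remaining $\fa$-generators vanish). Thus $\fg\cdot x=V_m$ and $\dim G(x)=m$. In (b) both $\fn\cdot w_j$ and $\fn\cdot e_j$ lie in $V$ and $\dim(\fn\cdot x)=q=\dim V$, so $\fn\cdot x=V$; the $\fa$-directions stay inside $V$, and the only genuinely new contribution is $\fk_0\cdot x\subseteq E$, which is complementary to $V$ and has dimension $p-q-1$. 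Hence $\dim G(x)=q+(p-q-1)=p-1$.

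Case (a) is the real obstacle, because there $\fn\cdot x$ is spread across $V$, $E$ and the $\tilde e_i$'s and is not transparently a coordinate subspace; what must be shown is that $\fa+\fk_0$ enlarges it by \emph{exactly} one dimension past the value $p+q-k-1$ of Theorem \ref{N}. For the upper bound I would exploit that $G$ preserves the isotropic flag $V_1\subset\cdots\subset V_q$ (immediate from Remark \ref{rem1}: $\mathbf N$ stabilises each $V_j$, $\mathbf A$ scales each $\R w_j$, $\mathbf K_0$ fixes the $w_j$), hence also preserves each $V_j^{\perp}$; since $k=\min\{i:\gamma_i\neq0\}$ we have $x\in V_{k-1}^{\perp}$, so the orbit, and therefore $\fg\cdot x$, lies in $V_{k-1}^{\perp}$. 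Combining this with the $G$-invariant constraint $\fg\cdot x\subseteq x^{\perp}$ (valid since $\fg\subseteq\fs(p,q)$ is skew for the form) and noting that $x\notin V_{k-1}$ makes these $k$ linear conditions independent, one gets $\dim(\fg\cdot x)\leq\dim(x^{\perp}\cap V_{k-1}^{\perp})=p+q-k$. For the matching lower bound I would test against the functional $\langle\,\cdot\,,w_k\rangle$: for $Z\in\fn$ one has $\langle Zx,w_k\rangle=-\langle x,Zw_k\rangle=0$, because $Zw_k\in V_{k-1}$ while $x\in V_{k-1}^{\perp}$, so this functional annihilates all of $\fn\cdot x$; yet the $\fa$-vector $-\alpha_k w_k+\gamma_k\tilde e_k$ satisfies $\langle -\alpha_k w_k+\gamma_k\tilde e_k,\,w_k\rangle=2\gamma_k\neq0$, so it lies in $\fg\cdot x\setminus\fn\cdot x$. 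This raises the dimension to exactly $p+q-k$ and closes case (a).

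The main difficulty, as anticipated, is precisely the two-sided estimate in (a): the upper bound needs the right $G$-invariant subspace (the flag member $V_{k-1}^{\perp}$ together with $x^{\perp}$) rather than a brute-force recount of the isotropy equations, and the lower bound needs one witnessing direction together with the observation that $\langle\,\cdot\,,w_k\rangle$ kills $\fn\cdot x$; making both meet at $p+q-k$ is where the argument is delicate. Once $\fn\cdot x$ has been identified with a coordinate subspace, cases (b) and (c) are essentially bookkeeping.
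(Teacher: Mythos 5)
Your proof is correct, but it takes a genuinely different route from the paper's. The paper proceeds exactly as in Theorem \ref{N}: it writes a general element $X$ of $\fk_{0}\oplus\fa\oplus\fn$ in matrix form, spells out the full linear system $Xx=0$ (system (\ref{KAN-equ})), and in each of the three cases eliminates variables to compute the dimension of the isotropy algebra, concluding via $\dim G(x)=\dim\fg-\dim\fg_x$; in effect it redoes the eliminations of Theorem \ref{N} with the extra unknowns $K_{ij}$ and $c_i$. You instead work with the image $\fg\cdot x$ (equivalent by rank--nullity), import the values of $\dim(\fn\cdot x)$ established inside the proof of Theorem \ref{N}, and measure only the increment contributed by $\fa$ and $\fk_0$: in cases (b) and (c) you identify $\fn\cdot x$ with the coordinate subspaces $\mathrm{span}(w_1,\dots,w_q)$ and $\mathrm{span}(w_1,\dots,w_{m-1})$ and add, respectively, the $(p-q-1)$-dimensional $\fk_0$-sphere directions and the single $\fa$-direction $w_m$; in case (a) you prove a two-sided estimate, with upper bound $\fg\cdot x\subseteq x^{\perp}\cap\mathrm{span}(w_1,\dots,w_{k-1})^{\perp}$ (of dimension $p+q-k$, using $G$-invariance of the isotropic flag, skew-symmetry of $\fg$ with respect to the form, and $x\notin\mathrm{span}(w_1,\dots,w_{k-1})$), and lower bound via the witness $-\alpha_k w_k+\gamma_k(e_{p-k+1}+e_{p+k})\in\fa\cdot x$, on which $\langle\cdot\,,w_k\rangle$ is nonzero even though it annihilates all of $\fn\cdot x$. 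Both steps check out (the needed pairings are $\langle x,w_i\rangle=x^{p-i+1}+x^{p+i}$ and $\langle e_{p-k+1}+e_{p+k},w_k\rangle=2$). The paper's approach buys self-containedness and an explicit parametrization of the stabilizer; yours buys brevity, avoids recomputation, and explains structurally why enlarging $\N$ to ${\bf K}_0\A\N$ raises the orbit dimension by exactly one in case (a) and by exactly $p-q-1$ in case (b). Your reliance on claims internal to the proof of Theorem \ref{N} (rather than its statement, which only asserts the action is not of cohomogeneity one) is legitimate and consistent with the paper's own practice, which later invokes ``the proof of Theorem \ref{N}-case(2)''. One slip of wording: in case (c) the remaining $\fa$-generators $-\alpha_i w_i$ (for $i<m$ with $\alpha_i\neq 0$) do not vanish; they merely lie in $\mathrm{span}(w_1,\dots,w_{m-1})$, so your conclusion $\fg\cdot x=\mathrm{span}(w_1,\dots,w_m)$ is unaffected.
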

\begin{proof}
	If $p=q,$ then $\mathfrak{k}_{0}=0$ and the action reduces to the action of ${\bf A}{\bf N}$. For the case $p\neq q,$ let $K_{ij}$ denote a typical entry of $\fk_{0},$ where $1\leq i,j\leq p-q.$ Then $K_{ij}=-K_{ji}.$ Let $ x=\sum_{i=1}^{p+q} x^{i}e_{i} $ be an arbitrary fixed element of $ \R^{p,q} $. Let $ (\mathfrak{k}_{0}\oplus\fa\oplus \fn)_{x} $ be the Lie subalgebra of $ \mathfrak{k}_{0}\oplus\fa\oplus \fn $ corresponding to the stabilizer subgroup at $ x $, say $ ({\bf K}_{0}{\bf A}{\bf N})_{x} $. Hence 
	\begin{eqnarray*}
		(\mathfrak{k}_{0}\oplus\fa\oplus \fn)_{x}&=&\{X\in \mathfrak{k}_{0}\oplus\fa\oplus \fn | exp(tX)x=x, \quad \forall t\in \R\}
		\\
		&=& \{X\in \mathfrak{k}_{0}\oplus\fa\oplus \fn |\ Xx=0\}.
	\end{eqnarray*}
	Let $X$ be a typical element of $\mathfrak{k}_{0}\oplus\fa\oplus \fn$, then by (1) and Proposition \ref{fn} one gets that $X$ is in one of the following forms.
	\begin{figure}[!h]\label{figure2}
		\centerline{\includegraphics[height=5cm]{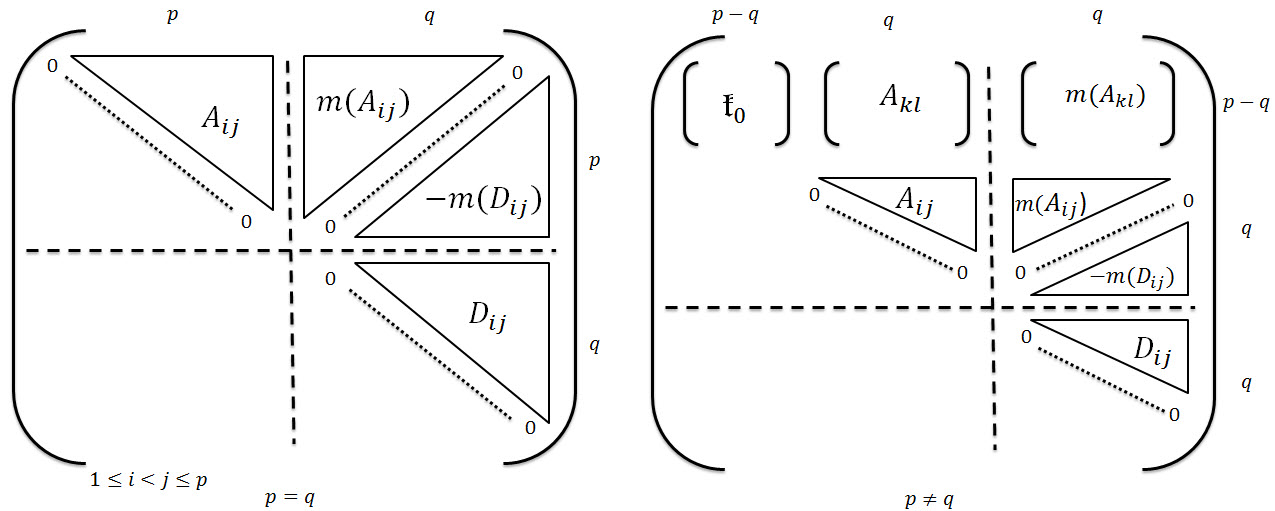}}
	\end{figure}
	
	Hence the equation $ Xx=0 $ becomes as follows.
	\begin{eqnarray}\label{KAN-equ}
		\begin{cases}
			\sum_{i=j+1}^{p-q}K_{ji}x^{i}-\sum_{i=1}^{j-1}K_{ij}x^{i}+\sum_{i=p-q+1}^{p}A_{ji}(x^{i}+x^{2p-i+1})=0 \ , \quad 1\leqslant j\leqslant p-q,\quad  {\rm if}\ p\neq q,\\
			-\sum_{i=1}^{p-q}A_{i,p-q+1} x^{i} + \sum_{i=p-q+2}^{p} A_{p-q+1, i}(x^{i}+x^{2p-i+1}) + c_{q}x^{p+q}=0 \\
			-\sum_{i=1}^{p-q+1}A_{i,p-q+2} x^{i} + \sum_{i=p-q+3}^{p}A_{p-q+2, i}(x^{i}+x^{2p-i+1})+c_{q-1}x^{p+q-1}-D_{q-1,q}x^{p+q}=0\\ 
			-\sum_{i=1}^{p-q+2}A_{i,p-q+3} x^{i} + \sum_{i=p-q+4}^{p}A_{p-q+3, i}(x^{i}+x^{2p-i+1})+c_{q-2}x^{p+q-2}\\\quad -\sum_{i=p-q+1}^{p-q+2}D_{q-2,p-i+1}x^{2p-i+1}=0\\
			\vdots \\
			-\sum_{i=1}^{p-2}A_{i,p-1}x^{i}+A_{p-1,p}(x^{p}+x^{p+1})+c_{2}x^{p+2}
			-\sum_{i=p-q+1}^{p-2}D_{2,p-i+1}x^{2p-i+1 }=0\\
			-\sum_{i=1}^{p-1}A_{i,p} x^{i} +c_{1}x^{p+1}-\sum_{i=p-q+1}^{p-1}D_{1,p-i+1 }x^{2p-i+1 }=0\\
			\sum_{i=1}^{p-1}A_{i,p} x^{i} +c_{1}x^{p}+\sum_{i=p-q+1}^{p-1}D_{1,p-i+1 }x^{2p-i+1 }=0\\
			\sum_{i=1}^{p-2}A_{i,p-1} x^{i} +c_{2}x^{p-1}- D_{1, 2}(x^{p}+x^{p+1})+\sum_{i=p-q+1}^{p-2}D_{2,p-i+1}x^{2p-i+1 }=0\\
			\vdots\\
			\sum_{i=1}^{p-q+2}A_{i,p-q+3} x^{i} +c_{q-2}x^{p-q+3}- \sum_{i=p-q+4}^{p}D_{p-i+1, q-2}(x^{i}+x^{2p-i+1})\\\quad +\sum_{i=p-q+1}^{p-q+2}D_{q-2,p-i+1}x^{2p-i+1}=0\\
			\sum_{i=1}^{p-q+1}A_{i,p-q+2} x^{i} +c_{q-1}x^{p-q+2}- \sum_{i=p-q+3}^{p}D_{p-i+1, q-1}(x^{i}+x^{2p-i+1})+D_{q-1,q}x^{p+q}=0\\
			\sum_{i=1}^{p-q}A_{i,p-q+1} x^{i} +c_{q}x^{p-q+1}-\sum_{i=p-q+2}^{p} D_{p-i+1, q}(x^{i}+x^{2p-i+1})=0 \\
		\end{cases}
	\end{eqnarray}

	The result is an immediate consequence of the following three cases for $x=\sum_{i=1}^{p+q} x^{i}e_{i}$, which are indicated in Proposition \ref{KAN} (a), (b) and (c), in the  system of equations (\ref{KAN-equ}) and using the fact that $\dim G(x)=\dim G-\dim G_{x}$.
	
	{\bf Case 1 :} 	$x\notin \bigcap_{i=1}^q\Pi_i$.
	
	{\it Claim 1:} In this case $\dim {\bf K}_{0}{\bf A}{\bf N}(x)=p+q-k$, where $ k= min\{i: 1\leq i\leq q\ \ and\ \  x\notin \Pi_i\} $.

	{\it Proof of Calim 1:}
	For simplicity, first assume that $k=1$. Then for $ 1\leq j\leq p-q $, if $p\neq q$, we have
	$$ A_{jp}=\frac{1}{x^{p}+x^{p+1}}\lbrace -\sum_{i=p-q+1}^{p-1} (x^{i}+x^{2p-i+1}) A_{ji}+\sum_{i=1}^{j-1} x^{i}K_{ij}-\sum_{i=j+1}^{p-q} x^{i}K_{ji}\rbrace, $$
	and for $ p-q+1\leq j\leq p-1 $ we have
	$$ A_{jp}=\frac{1}{x^{p}+x^{p+1}}\lbrace -\sum_{i=j+1}^{p-1} (x^{i}+x^{2p-i+1}) A_{ji} + \sum_{i=1}^{j-1}\lbrace x^{i}A_{ij}+x^{2p-i+1}D_{p-j+1,p-i+1}\rbrace-x^{2p-j+1}c_{p-j+1}\rbrace, $$
	and
	$$ D_{1,p-j+1}=\frac{1}{x^{p}+x^{p+1}}\lbrace -\sum_{i=j+1}^{p-1} (x^{i}+x^{2p-i+1}) D_{p-i+1,p-j+1} + \sum_{i=1}^{j-1}\lbrace x^{i}A_{ij}+x^{2p-i+1}D_{p-j+1,p-i+1}\rbrace+x^{j}c_{p-j+1}\rbrace ,$$ 
	where $x_k=0$ for any $k>p+q$
	(note that $D_{ij}=0$ for $i,j\geq q+1$).
	Summing the sides of the two equations including $c_{1},$ indicated in the lines six and seven in (\ref{KAN-equ}), and using the fact that $x^{p}+x^{p+1}\neq0$ one gets that $c_{1}=0.$ 
	These imply that $ A_{jp}$ and $D_{1,p-i+1} $ are linear functions of the other entries, where $ 1\leq j\leq p-1$ and $p-q+1\leqslant i\leqslant p-1$ and $c_{1}=0.$ Therefore $\dim (\fk_{0}\oplus\fa\oplus\fn)_{x}=\frac{p(p-1)+q(q+1)}{2}-\lbrace(p-q)+(q-1)+(q-1)+1\rbrace,$ which implies that $\dim {\bf K}_{0}{\bf A}{\bf N}(x)=\dim (\fk_{0}\oplus\fa\oplus\fn) - \dim(\fk_{0}\oplus\fa\oplus\fn)_x=p+q-1.$ 
	
	Now let  $1<k< q$. Then for $1\leq j\leq p-q$, if $p\neq q$, we have
	$$ A_{j,p-k+1}=\frac{1}{x^{p-k+1}+x^{p+k}}\lbrace -\sum_{i=p-q+1,i\neq p-k+1}^{p} (x^{i}+x^{2p-i+1}) A_{ji}+\sum_{i=1}^{j-1} x^{i}K_{ij}-\sum_{i=j+1}^{p-q} x^{i}K_{ji}\rbrace, $$
	for $p-q+1\leq j\leq p-k$ we have
	$$ A_{j,p-k+1}=\frac{1}{x^{p-k+1}+x^{p+k}}\lbrace -\sum_{\stackrel{i=j+1}{ i\neq p-k+1}}^{p} (x^{i}+x^{2p-i+1}) A_{ji} + \sum_{i=1}^{j-1}\lbrace x^{i}A_{ij}+x^{2p-i+1}D_{p-j+1,p-i+1}\rbrace -x^{2p-j+1}c_{p-j+1}\rbrace, $$
	and
	$$ D_{k,p-j+1}=\frac{1}{x^{p-k+1}+x^{p+k}}\lbrace-\sum_{\stackrel{i=j+1}{ i\neq p-k+1}}^{p}(x^{i}+x^{2p-i+1}) D_{p-i+1,p-j+1} + \sum_{i=1}^{j-1}\lbrace x^{i}A_{ij}+x^{2p-i+1}D_{p-j+1,p-i+1}\rbrace +x^{j}c_{p-j+1}\rbrace ,$$ 
	and for $p-k+2\leq j\leq p$, if $x^{p-k+1}\neq 0$ we have
	$$ A_{p-k+1,j}=\frac{1}{x^{p-k+1}}\lbrace - \sum_{i=1, i\neq p-k+1}^{j-1}\lbrace x^{i}A_{ij}+x^{2p-i+1}D_{p-j+1,p-i+1}\rbrace+x^{2p-j+1}c_{p-j+1}\rbrace, $$
	else we have
	$$ D_{p-j+1,k}=\frac{1}{x^{p+k}}\lbrace- \sum_{i=1,i\neq p-k+1}^{j-1}\lbrace x^{i}A_{ij}+x^{2p-i+1}D_{p-j+1,p-i+1}\rbrace -x^{p+k}c_{p-j+1}\rbrace.$$
	Summing the sides of the two equations including $c_{k}$ in (\ref{KAN-equ}) and using the fact that $x^{p-k+1}+x^{p+k}\neq0,$ one gets that $c_{k}=0.$
	These imply that $A_{j,p-k+1}$ and $ D_{k,p-j+l},$ where $1\leq j\leq p-k$ and $p-q+1\leq j\leq p-k$ and one of $A_{p-k+1,j}$ or $D_{p-j+1,k},$ where $p-k+2\leq j\leq p$ are linear functions of the other entiers and $c_{k}=0$. Therefore
	 $\dim (\fk_{0}\oplus\fa\oplus\fn)_{x}=\frac{p(p-1)+q(q+1)}{2}-\lbrace(p-k)+(q-k)+(k-1)+1\rbrace,$ which implies that $\dim {\bf K}_{0}{\bf AN}(x)=\dim (\fk_{0}\oplus\fa\oplus\fn) - \dim(\fk_{0}\oplus\fa\oplus\fn)_x=p+q-k.$

	Finally, let $k=q$. Then for $1\leq j\leq p-q$, if $p\neq q,$ we have
	$$ A_{j,p-q+1}=\frac{1}{x^{p-q+1}+x^{p+q}}\lbrace -\sum_{i=p-q+2}^{p} (x^{i}+x^{2p-i+1}) A_{ji}+\sum_{i=1}^{j-1} x^{i}K_{ij}-\sum_{i=j+1}^{p-q} x^{i}K_{ji}\rbrace, $$
	
	and for $p-q+2\leq j\leq p$, if $x^{p-q+1}\neq 0$ we have
	$$ A_{p-q+1,j}=\frac{1}{x^{p-q+1}}\lbrace -\sum_{i=1, i\neq p-q+1}^{j-1}\lbrace x^{i}A_{ij}+x^{2p-i+1}D_{p-j+1,p-i+1}\rbrace +x^{2p-j+1}c_{p-j+1} \rbrace, $$
	else we have
	$$ D_{p-j+1,q}=\frac{1}{x^{p+q}}\lbrace -\sum_{i=1,i\neq p-q+1}^{j-1}\lbrace x^{i}A_{ij}+x^{2p-i+1}D_{p-j+1,p-i+1}\rbrace - x^{j}c_{p-j+1} \rbrace ,$$
	These imply that $A_{j,p-q+1}$ where $1\leq j\leq p-q$ and one of the $A_{p-q+1,j}$ or $D_{p-j+1,q},$ where $p-q+2\leq j\leq p$ are linear functions of the other entries. Summing the sides of the two equations including $c_{q}$ in (\ref{KAN-equ}) and using the fact that  $x^{p-q+1}+x^{p+q}\neq0,$ one gets that $c_{q}=0.$ Therefore $\dim (\fk_{0}\oplus\fa\oplus\fn)_{x}=\frac{p(p-1)+q(q+1)}{2}-\lbrace(p-q)+(q-1)+1\rbrace,$ which implies that $\dim {\bf K}_{0}{\bf A}{\bf N}(x)=\dim (\fk_{0}\oplus\fa\oplus\fn) - \dim(\fk_{0}\oplus\fa\oplus\fn)_x=p.$
	
	{\bf Case 2:} $x\in \bigcap_{i=1}^q\Pi_i\diagdown \bigcap_{j=1}^{p-q}\mathscr{P}_j$. 
	
	{\it Claim 2:}  $\dim {\bf K}_{0}{\bf A}{\bf N}(x)=p-1$.
	
	{\it Proof of Claim 2:} Since $x\in \bigcap_{i=1}^q\Pi_i\diagdown \bigcap_{j=1}^{p-q}\mathscr{P}_j$, so $x^{p-i+1}+x^{p+i}=0$ for all $1\leq i\leq q$ and $x^{j_{0}}\neq 0 $  for some $1\leq j_{0}\leq p-q.$ Then the system of equations (\ref{KAN-equ}) reduces to the following system.
	
		\begin{eqnarray}\label{KAN-equ2}
	\begin{cases}
	\sum_{i=j+1}^{p-q}K_{ji}x^{i}-\sum_{i=1}^{j-1}K_{ij}x^{i}=0 \ , \quad 1\leqslant j\leqslant p-q,\quad  {\rm if}\ p\neq q,\\
	-\sum_{i=1}^{p-1}A_{i,p} x^{i} +c_{1}x^{p+1}-\sum_{i=p-q+1}^{p-1}D_{1,p-i+1 }x^{2p-i+1 }=0\\
	\sum_{i=1}^{p-1}A_{i,p} x^{i} +c_{1}x^{p}+\sum_{i=p-q+1}^{p-1}D_{1,p-i+1 }x^{2p-i+1 }=0\\
	\sum_{i=1}^{p-2}A_{i,p-1} x^{i} +c_{2}x^{p-1}+\sum_{i=p-q+1}^{p-2}D_{2,p-i+1}x^{2p-i+1 }=0\\
	\vdots\\
	\sum_{i=1}^{p-q+2}A_{i,p-q+3} x^{i} +c_{q-2}x^{p-q+3} +\sum_{i=p-q+1}^{p-q+2}D_{q-2,p-i+1}x^{2p-i+1}=0\\
	\sum_{i=1}^{p-q+1}A_{i,p-q+2} x^{i} +c_{q-1}x^{p-q+2}+D_{q-1,q}x^{p+q}=0\\
	\sum_{i=1}^{p-q}A_{i,p-q+1} x^{i} +c_{q}x^{p-q+1}=0 \\
	\end{cases}
	\end{eqnarray}
	
	If $p\neq q,$ then for $1\leq j\leq p-q$ and $j\neq j_{0},$ we have
	$$ K_{j_{0}j}=\frac{1}{x^{j_{0}}}\lbrace -\sum_{i=1, i\neq j_{0}}^{j-1} x^{i}K_{ij}+\sum_{i=j+1}^{p-q} x^{i}K_{ji}\rbrace. $$
	
	Hence for $p-q+1\leq j\leq p$ we have 
	$$ A_{j_{0}j}=\frac{1}{x^{j_{0}}}\lbrace -\sum_{i=1, i\neq j_{0}}^{j-1} x^{i} A_{ij} - \sum_{i=p-q+1}^{j-1}x^{2p-i+1}D_{p-j-1,p-i+1}+ x^{j}c_{p-j+1}\rbrace. $$
	
	These imply that $K_{j_{0}j}$ where $1\leq j\leq p-q,$ with $j\neq j_{0}$ and $ A_{j_{0}j}$ are linear functions of the other entiers, where $ p-q+1\leq j\leq p.$ Therefore $\dim (\fk_{0}\oplus\fa\oplus\fn)_{x}=\frac{p(p-1)+q(q+1)}{2}-((p-q-1)+q),$ which implies that $\dim {\bf K}_{0}{\bf A}{\bf N}(x)=\dim (\fk_{0}\oplus\fa\oplus\fn) - \dim(\fk_{0}\oplus\fa\oplus\fn)_x=p-1.$

	{\bf Case 3 :}  $x\in \bigcap_{i=1}^q\Pi_i\cap \bigcap_{j=1}^{p-q}\mathscr{P}_j$.
	
	{\it Claim 3:} In this case $\dim {\bf K}_{0}{\bf A}{\bf N}(x)=l$, where $ l=max\{j: 1\leq j\leq q \quad\textit{and}\quad x^{p+j}=-x^{p-j+1}\neq 0 \}.$
	
	{\it Proof of Claim 3:} Let $x^{p+l}=-x^{p-l+1}\neq 0$ where $ l=max\{j: 1\leq j\leq q \quad\textit{and}\quad x^{p+j}=-x^{p-j+1}\neq 0 \}.$ If $l=1$, then $x=r(e_p-e_{p+1})$ for some $r\in \R$, which implies that ${\bf K}_{0}{\bf A}{\bf N}(x)=\R x$ and so $\dim {\bf K}_{0}{\bf A}{\bf N}(x)=1$. If $l>1$, then the system of equations (\ref{KAN-equ}) reduces to the following system.
	\begin{eqnarray}\label{KAN-equ3}
	\begin{cases}
	\sum_{i=p-l+1}^{p-1}\lbrace A_{i,p} x^{i} + D_{1,p-i+1 }x^{2p-i+1 }\rbrace+c_{1}x^{p}=0\\
	\vdots \\
	\sum_{i=p-l+1}^{p-l+1}\lbrace A_{i,p-l+2} x^{i} + D_{l-1,p-i+1}x^{2p-i+1}\rbrace+c_{l-1}x^{p-l+2}=0.\\
	c_{l}x^{p-l+1}=0\\
	c_{i}x^{p-i+1}=0,\quad l+1\leq i\leq q.
	\end{cases}
	\end{eqnarray}
For $l+1\leq i\leq q,$ we have $x^{p-i+1}=0$ then $c_{i}\in \R.$
	Since $x^{p-l+1}\neq 0,$ we have $c_{l}=0$ and for $1\leq m\leq l-1$ we have
	$$ D_{ml}=\frac{1}{x^{p+l}}\lbrace -\sum_{i=p-q+1}^{p-m} x^{i} A_{i,p-m+1} - \sum_{i=m+1,i\neq l}^{q} x^{p+i}D_{mi}-x^{p-m+1}c_{m}\rbrace ,$$
	These imply that $ D_{ml}$ are linear functions of the other entiers, where $ 1\leq m\leq l-1$ and $c_{l}=0.$ Therefore $\dim (\fk_{0}\oplus\fa\oplus\fn)_{x}=\frac{p(p-1)+q(q+1)}{2}-l,$ and so $\dim {\bf K}_{0}{\bf A}{\bf N}(x)=\dim (\fk_{0}\oplus\fa\oplus\fn) - \dim(\fk_{0}\oplus\fa\oplus\fn)_x=l.$
	
\end{proof}
Let $G={\bf A}{\bf N}$, $x\in \R^{p,q}$ and $m=\dim G(x)$.  The following result states that either $0\leqslant m\leqslant q$ or $p\leqslant m\leqslant p+q-1$.
\begin{pro}\label{AN}
	 Let $G={\bf AN}$, which acts on $\R^{p,q}$ naturally. Let $x=\sum_{i=1}^{p+q}x^{i}e_{i} $ be a typical nonzero element of $\R^{p,q}$ and $\dim G(x)=m$. Then one of the following statements hold.
	 
	a) If $x\notin \bigcap_{i=1}^q\Pi_i$ then $m=p+q-k$, where $ k= min\{i: 1\leq i\leq q\ \ and\ \  x\notin \Pi_i\} $. 
	
	b) If $x\in \bigcap_{i=1}^q\Pi_i\diagdown \bigcap_{j=1}^{p-q}\mathscr{P}_j$, then $m=q$.
	
	c) If $x\in \bigcap_{i=1}^q\Pi_i\cap \bigcap_{j=1}^{p-q}\mathscr{P}_j$ then $m=max\{j: 1\leq j\leq q \quad\textit{and}\quad x^{p+j}=-x^{p-j+1}\neq 0 \}.$
\end{pro}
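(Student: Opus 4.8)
The plan is to repeat the stabilizer computation of Proposition \ref{KAN} for the smaller algebra $\fa\oplus\fn$, which is obtained from $\fk_0\oplus\fa\oplus\fn$ by deleting the $\fk_0$-block. By Proposition \ref{fn} and the description of $\fa$ one has $\dim(\fa\oplus\fn)=q+q(p-1)=pq$, and $\dim G(x)=pq-\dim(\fa\oplus\fn)_x$ with $(\fa\oplus\fn)_x=\{X\in\fa\oplus\fn:Xx=0\}$; the defining equations of $(\fa\oplus\fn)_x$ are exactly the system $(\ref{KAN-equ})$ with every $K_{ij}$ set to $0$, and these $K_{ij}$ enter only the first $p-q$ equations because $\fk_0$ fixes $e_{p-q+1},\dots,e_{p+q}$. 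I would organize the whole comparison with Proposition \ref{KAN} through the identity
\[
\dim{\bf AN}(x)=\dim{\bf K}_0{\bf AN}(x)-\bigl(\dim(\fk_0\cdot x)-\dim(\fk_0\cdot x\cap(\fa\oplus\fn)\cdot x)\bigr),
\]
which just expresses that $(\fk_0\oplus\fa\oplus\fn)\cdot x=\fk_0\cdot x+(\fa\oplus\fn)\cdot x$. Since $\fk_0\cong\fs(p-q)$ acts only on the first $p-q$ coordinates, $\fk_0\cdot x\subseteq\langle e_1,\dots,e_{p-q}\rangle$ and $\dim(\fk_0\cdot x)$ is the dimension of the $SO(p-q)$-orbit through $x'=(x^1,\dots,x^{p-q})$. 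It therefore suffices to locate $\fk_0\cdot x$ relative to the ${\bf AN}$-orbit tangent space in each of the three strata.

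Cases (a) and (c) give no change, so the values are inherited from Proposition \ref{KAN}. In stratum (c) one has $x^i=0$ for $1\le i\le p-q$, hence $x'=0$ and $\fk_0\cdot x=0$, so $m=l$ with $l=\max\{j:x^{p+j}=-x^{p-j+1}\neq0\}$. In stratum (a) the vector $x$ has a nonzero component along some $e_{p-k+1}+e_{p+k}$, and by Remark \ref{rem1} applying $\fn$ to that component sweeps out all of $\langle e_1,\dots,e_{p-q}\rangle\supseteq\fk_0\cdot x$; hence $\fk_0\cdot x\subseteq(\fa\oplus\fn)\cdot x$ and $m=p+q-k$. Concretely this matches the reduction of $(\ref{KAN-equ})$: once the $K$-terms are dropped the first $p-q$ equations still solve for $A_{j,p-k+1}$ ($1\le j\le p-q$), and no $K_{ij}$ is ever a pivot. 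For $p=q$ one has $\fk_0=0$ and stratum (b) is vacuous, so there is nothing further to check.

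The genuine change occurs in stratum (b), which I expect to be the crux. Here $x^{p-i+1}+x^{p+i}=0$ for all $i$ while $x'\neq0$, so every coefficient $x^i+x^{2p-i+1}$ vanishes and the first $p-q$ equations collapse to the pure system $Kx'=0$, $K\in\fs(p-q)$, whose solution space $\fs(p-q)_{x'}$ has dimension $\binom{p-q-1}{2}$; thus $\dim(\fk_0\cdot x)=p-q-1$. On the other hand, by Remark \ref{rem1} the surviving image $(\fa\oplus\fn)\cdot x$ lies in $\langle w_1,\dots,w_q\rangle$, which meets $\langle e_1,\dots,e_{p-q}\rangle$ only in $0$; hence $\fk_0\cdot x\cap(\fa\oplus\fn)\cdot x=0$, the orbit dimension drops by $p-q-1$, and $m=(p-1)-(p-q-1)=q$. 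As a direct verification, deleting the first (all-$K$) line of $(\ref{KAN-equ2})$ leaves the $A/c/D$-block pivoting precisely on $A_{j_0,p-q+1},\dots,A_{j_0,p}$, so $\dim(\fa\oplus\fn)_x=pq-q$. The delicate point is exactly this rank count: I must confirm that removing the $K$-rows leaves the surviving block of rank exactly $q$, which holds because the row reductions of Proposition \ref{KAN}, Case 2, never used a $K$-entry.
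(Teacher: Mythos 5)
Your proposal is correct, but it is organized around a genuinely different device than the paper's own proof. The paper's proof is a single sentence: rerun the stabilizer computation of Proposition \ref{KAN} verbatim, with the first line of the system (\ref{KAN-equ}) replaced by $\sum_{i=p-q+1}^{p}A_{ji}(x^{i}+x^{2p-i+1})=0$, $1\leqslant j\leqslant p-q$ (i.e.\ all $K_{ij}$ deleted), and read off the pivot counts again. That is exactly your ``direct verification'' thread: no $K_{ij}$ is ever a pivot in Cases 1 and 3, so deleting them changes nothing there, while in Case 2 the $p-q-1$ pivots $K_{j_0j}$ disappear and only the $q$ pivots $A_{j_0,p-q+1},\dots,A_{j_0,p}$ survive, giving $m=q$ instead of $p-1$. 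What you add on top is the identity $\dim{\bf AN}(x)=\dim{\bf K}_0{\bf AN}(x)-\dim(\fk_0\cdot x)+\dim\bigl(\fk_0\cdot x\cap(\fa\oplus\fn)\cdot x\bigr)$, with the two images located geometrically via Remark \ref{rem1}. This is more conceptual than the paper's route: it explains \emph{why} the dimension is unchanged on strata (a) and (c) and drops by exactly $\dim(\fk_0\cdot x)=p-q-1$ on stratum (b) --- namely, the $\fk_0$-directions are absorbed by (respectively transverse to) the ${\bf AN}$-orbit according as $x\notin\mathbb{W}^p$ or $x\in\mathbb{W}^p=\bigcap_{i=1}^q\Pi_i$ --- which is precisely the phenomenon exploited later in Proposition \ref{degspace}. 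The paper's route is shorter and purely mechanical once Proposition \ref{KAN} is in hand; yours carries independent geometric content.

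One caveat on stratum (a): your claim that applying $\fn$ to the component of $x$ along $e_{p-k+1}+e_{p+k}$ ``sweeps out all of $\langle e_1,\dots,e_{p-q}\rangle$'' is too strong as written. An element $X\in\fn$ with free entries $a_i=A_{i,p-k+1}$, $1\leq i\leq p-q$, acts simultaneously on the $e_j$-components of $x$, so by Remark \ref{rem1} one gets $Xx=(x^{p-k+1}+x^{p+k})\sum_i a_ie_i-\bigl(\sum_j x^ja_j\bigr)w_k$, a graph over $\langle e_1,\dots,e_{p-q}\rangle$ rather than that subspace itself. The inclusion you actually need, $\fk_0\cdot x\subseteq(\fa\oplus\fn)\cdot x$, still holds: choosing $\sum_i a_ie_i$ proportional to $Kx'$ makes the correction term $\sum_jx^ja_j=\langle x',Kx'\rangle$ (up to a nonzero scalar) vanish by skew-symmetry of $K$. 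Since you also close this case by the pivot count, the gap is cosmetic, but it should be patched as above.
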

\begin{proof}
The proof is similar to that of Proposition \ref{KAN}, just the first line in the system of equation (\ref{KAN-equ}) is replaced by 
$$\sum_{i=p-q+1}^{p}A_{ji}(x^{i}+x^{2p-i+1})=0 \ , \quad 1\leqslant j\leqslant p-q,\quad  {\rm if}\ p\neq q.$$\end{proof}

Let $G={\bf K}'{\bf AN}$, where ${\bf K}'\subseteq {\bf K}_0$. We determine the orbits of the action of $G$ on the hyperquadrics in $\R^{p,q}$ by using Propositions \ref{KAN} and \ref{AN}. The hyperplane $\Pi_1$ divides $\R^{p,q}$ into two connected open sets $U_1$ and $U_2$ in $\R^{p,q}$, defined by $x^p>-x^{p+1}$ and $x^p<-x^{p+1}$ respectively. Then the sets $\Lambda^{p+q-1}\cap U_1$ and $\Lambda^{p+q-1}\cap U_2$ are nonempty connected open sets in $\Lambda^{p+q-1}$. Let $i\in\{1,2\}$. For any $x\in \Lambda^{p+q-1}\cap U_i$, the dimension of the orbit $G(x)$ is $p+q-1$ by Propositions \ref{KAN}-(a) and \ref{AN}-(a). Hence $G(x)$ is a connected open submanifold of $\Lambda^{p+q-1}\cap U_i$, for every $x\in \Lambda^{p+q-1}\cap U_i$. This implies that $G(x)=\Lambda^{p+q-1}\cap U_i$. Thus there are exactly two $p+q-1$ dimensional orbits in $\Lambda^{p+q-1}$. \\
The subspace $\Pi_1\diagdown\Pi_1\cap\Pi_2$ is the union of two connected open subsets $V_1$ and $V_2$ in $\Pi_1$, defined by $x^{p-1}>-x^{p+2}$ and $x^{p-1}<-x^{p+2}$. Let $i\in\{1,2\}$ and $x\in V_i$. Then $\dim G(x)=p+q-2$ by Propositions \ref{KAN}-(a) and \ref{AN}-(a). Using the fact that $\Lambda^{p+q-1}\cap V_i$ is a $p+q-2$ dimensional connected manifold, one gets that $G(x)= \Lambda^{p+q-1}\cap V_i$. Therefore there are only two orbits with dimension $p+q-2$ in $\Lambda^{p+q-1}$. 
And so, for any $2\leqslant j\leqslant q$, $\bigcap_{i=1}^{j-1}\Pi_i\diagdown\bigcap_{i=1}^j\Pi_i$ 	is the union of two nonempty open sets $W_1$ and $W_2$ in $\bigcap_{i=1}^{j-1}\Pi_i$, defined by $x^{p-j+1}>-x^{p+j}$ and $x^{p-j+1}<-x^{p+j}$ respectively. For any $x\in W_i$, where $i\in \{1,2\}$, we have $\dim G(x)=p+q-j$ by Propositions \ref{KAN}-(a) and \ref{AN}-(a). The fact that $\Lambda^{p+q-1}\cap W_i$ is a $p+q-j$ dimensional connected manifold implies that $G(x)=\Lambda^{p+q-1}\cap W_i$, i.e. there are exactly two orbits with dimension $p+q-j$ in $\Lambda^{p+q-1}$. Thus the only remaining set of the points of $\Lambda^{p+q-1}$ to study their orbits is  $\bigcap_{i=1}^q\Pi_i\cap \Lambda^{p+q-1}$.\\
We have 
$$\bigcap_{i=1}^q\Pi_i\cap \Lambda^{p+q-1}=\R w_1\oplus \cdots \oplus \R w_q.$$
Let $k\in \{1,..., q\}$. Then $w_k$ is fixed by ${\bf K}_0$, and so by ${\bf K}'$. Also ${\bf A}(w_k)=\R_+w_k$ and ${\bf N}(w_k)=\R w_1\oplus \cdots\oplus \R w_{k-1} +w_k$ by Remark \ref{rem1}. This shows that for any point $x=\Sigma_{j=1}^{l}r_jw_j$, where $r_l\neq 0$, $$G(x)=\R w_1\oplus \cdots\oplus \R w_{l-1}+\R_+(r_lw_l).$$
Hence there are two $l$-dimensional orbits depending on the sign of $r_l$. Thus we proved the following corollary.

\begin{cor}\label{co-Lambda}
Let $G={\bf K}'{\bf A}{\bf N}$, where ${\bf K}'\subseteq {\bf K}_0$, $x\in \Lambda^{p+q-1}$ and $m=\dim G(x)$. Then $m\in \{1,..., q\}\cup \{p,..., p+q-1\}$ and there are exactly two orbits with dimension $m$ in $\Lambda^{p+q-1}$. Furthermore,

(a) if $m\in \{1,..., q\}$ then $G(x)$ is either $\Sigma_{i=1}^{m-1}\R w_{i}+\R_+ w_m$ or $\Sigma_{i=1}^{m-1}\R w_{i}+\R_- w_m$.

(b) if $m\in \{p,..., p+q-1\}$, then $G(x)$ is one of the connected components of  $\bigcap_{i=0}^{p+q-m-1}\Pi_i\diagdown\bigcap_{i=0}^{p+q-m}\Pi_i$  (here, $\Pi_0$ denotes $\R^{p,q}$).

(c) $G(x)$ is not dependent on the choice of ${\bf K}'$.  
\end{cor}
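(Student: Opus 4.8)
The plan is to classify the $G$-orbits on $\Lambda^{p+q-1}$ by combining the dimension counts already available from Propositions \ref{KAN} and \ref{AN} with the explicit description of the actions of ${\bf A}$, ${\bf N}$ and ${\bf K}_0$ on the vectors $w_i$ supplied by Remark \ref{rem1}. The starting observation is that $G$, being a subgroup of $SO_\circ(p,q)$, preserves $\Lambda^{p+q-1}$, so it suffices to stratify the nullcone by the descending chain $\R^{p,q}=\Pi_0\supseteq \Pi_1\supseteq \Pi_1\cap\Pi_2\supseteq\cdots\supseteq \bigcap_{i=1}^q\Pi_i$ and to locate each orbit inside one stratum. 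A preliminary remark streamlines everything: the points treated in case (b) of Propositions \ref{KAN} and \ref{AN}, namely those in $\bigcap_{i=1}^q\Pi_i\diagdown\bigcap_{j=1}^{p-q}\mathscr{P}_j$, all satisfy $\langle x,x\rangle=\sum_{i=1}^{p-q}(x^i)^2>0$ and hence are space-like, so they never meet $\Lambda^{p+q-1}$. Consequently only cases (a) and (c) of those propositions are relevant on the nullcone, which is precisely why the attainable orbit dimensions split into the two blocks $\{p,\dots,p+q-1\}$ and $\{1,\dots,q\}$.

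For the generic strata I would argue as follows. For each $j$ with $1\leqslant j\leqslant q$, set $S_j=\bigcap_{i=0}^{j-1}\Pi_i\diagdown\bigcap_{i=0}^{j}\Pi_i$; the sign of $x^{p-j+1}+x^{p+j}$ splits $\Lambda^{p+q-1}\cap S_j$ into two nonempty connected pieces, each a $(p+q-j)$-dimensional submanifold of $\Lambda^{p+q-1}$. By case (a) of Propositions \ref{KAN} and \ref{AN} (with first-escape index $k=j$) every orbit through a point of $S_j$ has dimension exactly $p+q-j$, so each such orbit is \emph{open} in $\Lambda^{p+q-1}\cap S_j$. Since a connected manifold cannot be partitioned into more than one nonempty open set, each of the two pieces is a single orbit; this produces, for $m=p+q-j$, precisely two $m$-dimensional orbits and gives part (b).

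The exceptional stratum $\bigcap_{i=1}^q\Pi_i\cap\Lambda^{p+q-1}=\R w_1\oplus\cdots\oplus\R w_q$ I would handle by direct computation using Remark \ref{rem1}: ${\bf K}'$ fixes every $w_k$, while ${\bf A}(w_k)=\R_+w_k$ and ${\bf N}(w_k)=\R w_1\oplus\cdots\oplus\R w_{k-1}+w_k$. Writing a nonzero point as $x=\sum_{j=1}^{l}r_jw_j$ with $r_l\neq 0$, one reads off $G(x)=\R w_1\oplus\cdots\oplus\R w_{l-1}+\R_+(r_lw_l)$, an $l$-dimensional orbit determined up to the sign of $r_l$; this yields two orbits for each $l\in\{1,\dots,q\}$ and establishes part (a). Part (c) then follows by a squeeze: on the generic strata the chain ${\bf A}{\bf N}\subseteq {\bf K}'{\bf A}{\bf N}\subseteq {\bf K}_0{\bf A}{\bf N}$, together with the equality of the dimension formulas in case (a) of Propositions \ref{KAN} and \ref{AN}, forces all three orbits to coincide with the connected component; on the exceptional stratum ${\bf K}'$ acts trivially on the $w_k$. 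In either case $G(x)$ does not see ${\bf K}'$.

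The step I expect to be the real engine is the passage from ``$\dim G(x)$ equals the dimension of the ambient nullcone slice'' to ``$G(x)$ is the whole connected component''. This rests on the \emph{uniformity} of the dimension count over the entire slice (so that every orbit meeting it is open, not merely the distinguished one) together with connectedness; disjointness of the open orbits then forces a single orbit per component. The space-likeness of the case-(b) points is the supporting observation guaranteeing that no extraneous stratum interferes, and it is also what keeps the two dimension blocks disjoint when $p>q$, so that the count of exactly two orbits per dimension is unambiguous.
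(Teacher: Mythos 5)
Your proposal is correct and follows essentially the same route as the paper: the same stratification of $\Lambda^{p+q-1}$ by the chain of subspaces $\bigcap_{i=1}^{j}\Pi_i$, the same open-orbit-plus-connectedness argument on the generic strata using Propositions \ref{KAN}-(a) and \ref{AN}-(a), and the same explicit computation on $\R w_1\oplus\cdots\oplus\R w_q$ via Remark \ref{rem1}, with the squeeze ${\bf A}{\bf N}\subseteq{\bf K}'{\bf A}{\bf N}\subseteq{\bf K}_0{\bf A}{\bf N}$ giving part (c). Your preliminary observation that the case-(b) points of those propositions are space-like, hence never meet the nullcone, is left implicit in the paper but is a accurate and welcome clarification.
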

%
%
%
Let $k\in \{1,...,q\}$. By a similar argument of that of the proof of Corollary \ref{co-Lambda}
one gets that the subspace $\bigcap^{k-1}_{i=0}\Pi_i\diagdown\bigcap_{i=0}^k\Pi_i$ is the union of two disjoint connected open sets $U_1$ and $U_2$ in $\bigcap_{i=0}^{k-1}\Pi_i$ (here $\Pi_0=\R^{p,q}$). Let $j\in \{1,2\}$. Then $\mathbb{H}^{p,q-1}(r)\cap U_j$ is a $p+q-k$ dimensional connected submanifold of $\mathbb{H}^{p,q-1}(r)$ which is preserved by $G$ by Remark \ref{rem1}, where $G={\bf K}'{\bf A}{\bf N}$. By Propositions \ref{AN}-(a) and \ref{KAN}-(a), we have $\dim G(x)=p+q-k$ for any $x\in \mathbb{H}^{p,q-1}(r)\cap U_j$. The orbit $G(x)$ is connected, so $G(x)=H^{p+q-k}(r)\cap U_j$. This shows that there are only two  orbits with dimension $p+q-k$ in $\mathbb{H}^{p,q-1}(r)$. Therefore the following corollary is proved.

\begin{cor}\label{co-H^{p,q-1}}
Let $G={\bf K}'{\bf A}{\bf N}$, where ${\bf K}'\subseteq {\bf K}_0$, $x\in \mathbb{H}^{p,q-1}(r)$, where $r\in \R_{+}$ and $m=\dim G(x)$. Then $m\in \{p,p+1,..., p+q-1\}$ and there are exactly two orbits with dimension $m$ in $\mathbb{H}^{p,q-1}(r)$. Furthermore,

(a) if $m\in \{p, p+1, ..., p+q-1\}$ then $G(x)$ is one of the connected components of  $\mathbb{H}^{p,q-1}(r)\cap (\bigcap_{i=0}^{p+q-m-1}\Pi_i\diagdown\bigcap_{i=0}^{p+q-m}\Pi_i)$.

(b) $G(x)$ is not dependent on the choice of ${\bf K}'$. 
\end{cor}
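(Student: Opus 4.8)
The plan is to mirror closely the argument already carried out for $\Lambda^{p+q-1}$ in Corollary \ref{co-Lambda}, replacing the null cone by the pseudo-hyperbolic space $\mathbb{H}^{p,q-1}(r)$ and using the dimension formulas from Propositions \ref{KAN} and \ref{AN} together with the invariance statements from Remark \ref{rem1}. First I would observe that every point $x\in\mathbb{H}^{p,q-1}(r)$ is time-like, so $\langle x,x\rangle=-r^2<0$; in particular $x$ cannot lie in $\bigcap_{i=1}^q\Pi_i\cap\bigcap_{j=1}^{p-q}\mathscr{P}_j$, because on that set all coordinates $x^i$ with $1\leqslant i\leqslant p-q$ vanish and $x^{p-i+1}=-x^{p+i}$ for all $i$, forcing $\langle x,x\rangle=0$. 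This rules out case (c) of Propositions \ref{KAN} and \ref{AN} on $\mathbb{H}^{p,q-1}(r)$. Likewise a point of $\bigcap_{i=1}^q\Pi_i\diagdown\bigcap_{j=1}^{p-q}\mathscr{P}_j$ satisfies $x^{p-i+1}=-x^{p+i}$, so the time-like coordinates cancel in pairs and $\langle x,x\rangle=\sum_{j=1}^{p-q}(x^j)^2\geqslant 0$, which again contradicts $\langle x,x\rangle=-r^2<0$. Hence every point of $\mathbb{H}^{p,q-1}(r)$ falls under case (a), i.e. $x\notin\bigcap_{i=1}^q\Pi_i$, and the orbit dimension is $p+q-k$ with $k=\min\{i:x\notin\Pi_i\}\in\{1,\dots,q\}$. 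Since $k$ ranges over $\{1,\dots,q\}$, the possible orbit dimensions are exactly $\{p,p+1,\dots,p+q-1\}$, establishing the range claim.

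Next I would fix $k\in\{1,\dots,q\}$ and $r\in\R_+$ and analyse the stratum $\bigcap_{i=0}^{k-1}\Pi_i\diagdown\bigcap_{i=0}^{k}\Pi_i$ intersected with $\mathbb{H}^{p,q-1}(r)$. As indicated in the paragraph preceding the statement, this stratum splits as a disjoint union of two connected open subsets $U_1,U_2$ of $\bigcap_{i=0}^{k-1}\Pi_i$, distinguished by the sign of $x^{p-k+1}+x^{p+k}$. The key structural input is Remark \ref{rem1}: the subgroups ${\bf K}_0$, ${\bf A}$, ${\bf N}$ each preserve $\bigcap_{i=1}^{j}\Pi_i$ for every $j$, so $G={\bf K}'{\bf A}{\bf N}$ with ${\bf K}'\subseteq{\bf K}_0$ preserves each $\Pi_i$-stratum; moreover these transformations cannot change the sign of $x^{p-k+1}+x^{p+k}$ by continuity and connectedness of $G$, so $G$ leaves each $U_j$ invariant. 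Consequently $\mathbb{H}^{p,q-1}(r)\cap U_j$ is a $G$-invariant connected submanifold of dimension $p+q-k$ (its dimension follows because intersecting the open set $U_j$ of the $(p+q-k+1)$-dimensional subspace $\bigcap_{i=0}^{k-1}\Pi_i$ with the nondegenerate hyperquadric drops one dimension).

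For each $x\in\mathbb{H}^{p,q-1}(r)\cap U_j$, Propositions \ref{KAN}-(a) and \ref{AN}-(a) give $\dim G(x)=p+q-k$, which equals the dimension of the ambient connected manifold $\mathbb{H}^{p,q-1}(r)\cap U_j$. Therefore the orbit $G(x)$ is an open submanifold of the connected manifold $\mathbb{H}^{p,q-1}(r)\cap U_j$; since orbits partition the manifold into sets that are simultaneously open (by the dimension equality) and closed (being orbits of the other points' complement, or by the standard fact that a maximal-dimensional orbit relatively open and its complement is a union of open orbits), connectedness forces $G(x)=\mathbb{H}^{p,q-1}(r)\cap U_j$. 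This yields exactly two orbits of dimension $p+q-k$, namely $\mathbb{H}^{p,q-1}(r)\cap U_1$ and $\mathbb{H}^{p,q-1}(r)\cap U_2$, which are the two connected components of $\mathbb{H}^{p,q-1}(r)\cap(\bigcap_{i=0}^{p+q-m-1}\Pi_i\diagdown\bigcap_{i=0}^{p+q-m}\Pi_i)$ with $m=p+q-k$, proving part (a).

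Finally part (b), the independence of $G(x)$ from the choice of ${\bf K}'\subseteq{\bf K}_0$, follows because the orbit $G(x)$ has been identified with the intrinsic set $\mathbb{H}^{p,q-1}(r)\cap U_j$, whose description makes no reference to ${\bf K}'$; equivalently, since $\dim G(x)=p+q-k$ is already attained by the subgroup ${\bf A}{\bf N}$ (Proposition \ref{AN}-(a)) and enlarging the acting group to ${\bf K}'{\bf A}{\bf N}\subseteq{\bf K}_0{\bf A}{\bf N}$ cannot increase the dimension beyond that of ${\bf K}_0{\bf A}{\bf N}(x)$, which is also $p+q-k$, all these groups produce the same connected orbit. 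I expect the only delicate point to be the bookkeeping that verifies $\langle x,x\rangle<0$ genuinely excludes cases (b) and (c) on all of $\mathbb{H}^{p,q-1}(r)$ so that case (a) applies uniformly; once that is settled, the open-equals-closed argument in the connected manifold $\mathbb{H}^{p,q-1}(r)\cap U_j$ is routine and entirely parallel to the null-cone case.
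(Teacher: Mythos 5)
Your proof is correct and follows essentially the same route as the paper: stratify $\mathbb{H}^{p,q-1}(r)$ by the hyperplanes $\Pi_i$, invoke Propositions \ref{KAN}-(a) and \ref{AN}-(a) together with the invariance supplied by Remark \ref{rem1}, and conclude with the open-orbit-in-a-connected-manifold argument, which also yields part (b) since the components are described without reference to ${\bf K}'$. The only difference is that you make explicit a step the paper leaves implicit---that $\langle x,x\rangle=-r^2<0$ forces $x\notin\bigcap_{i=1}^q\Pi_i$, so cases (b) and (c) of the propositions cannot occur on $\mathbb{H}^{p,q-1}(r)$---which is a clarification of the same argument rather than a different approach.
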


Now we are going to study the orbits of the action of $G$ on $\mathbb{S}^{p-1,q}$, where $G={\bf K}'{\bf A}{\bf N}$ and ${\bf K}'\subseteq {\bf K}_0$. Let $x\in \mathbb{S}^{p-1,q}(r)$. By a similar argument of that of the proof of Corollary \ref{co-Lambda} one gets that for any $x\in \mathbb{S}^{p-1,q}(r)\cap (\bigcap_{i=0}^{p+q-m-1}\Pi_i\diagdown\bigcap_{i=0}^{p+q-m}\Pi_i)$, where $p\leqslant m\leqslant p+q-1$, the orbit $G(x)$ is the connected component of  $\mathbb{S}^{p-1,q}(r)\cap (\bigcap_{i=0}^{p+q-m-1}\Pi_i\diagdown\bigcap_{i=0}^{p+q-m}\Pi_i)$ containing $x$. This shows that for each $m\in \{p,p+1,..., p+q-1\}$ there are exactly two orbits with dimension $m$ in $\mathbb{S}^{p-1,q}(r)$. Furthermore, these orbits are not dependent on the choice of ${\bf K}'$ by Propositions \ref{KAN}-(a) and \ref{AN}-(a). Thus the only remaining points of $\mathbb{S}^{p-1,q}(r)$ to study their orbits is  $\bigcap_{i=1}^q\Pi_i\cap \mathbb{S}^{p-1,q}(r)$. If $p=q$, this intersection is empty.\\
Let $\mathbb{W}^p=\bigcap^q_{i=1}\Pi_i$. If $p>q$ then the intersection $\mathbb{W}^{p}\cap \mathbb{S}^{p-1,q}(r) $ is equal to the cylinder $C^{p-1}_q(r)$ defined by
$\mathbb{W}^{p}\cap \mathbb{S}^{p-1,q}(r)=\mathbb{S}^{p-q-1}(r)\times \R^{q}$, where $\mathbb{S}^{p-q-1}(r)$ is the $p-q-1$ dimensional Euclidean sphere with radius $r$ in $\R e_{1}\oplus \cdots\oplus \R e_{p-q}\subset \mathbb{W}^p$ and $\R^q=\bigcap_{i=1}^{p-q}\mathscr{P}_i\cap \bigcap_{j=1}^q \Pi_{j}$.  If $p=q+1$, then $C^{p-1}_q(r)=\{\pm r\}\times \R^q$. If $p>q+1$, then the cylinder  $C^{p-1}_q(r)$ is connected, and if $q>1$ then the complement $\mathbb{S}^{p-1,q}(r)\diagdown C^{p-1}_q(r) $ is connected too. For $q=1$ the complement  $\mathbb{S}^{p-1,1}(r)\diagdown C^{p-1}_1(r)$  has two connected components. 

Let $p\geqslant q+1$. The action of $ {\bf K}_{0}$ and ${\bf A}$ on a point $x+\sum_{j=1}^{q}r^{j}w_{j}\in C^{p-1}_q(r) $ is given by
$$x+\sum_{j=1}^{q}r^{j}w_{j}\longmapsto Ax+\sum_{j=1}^{q}r^{j}w_{j}\quad {\rm and}\quad  x+\sum_{j=1}^{q}r^{j}w_{j}\longmapsto x+\sum_{j=1}^{q}e^{-c_{j}}r^{j}w_{j},$$
with $x\in \mathbb{S}^{p-q-1}(r)\subset \R^{p-q}$ and $r^{j}\in \R$, where $A\in SO(p-q)$, $c_{j}\in \R$, respectively. By Remark \ref{rem1} we have  ${\bf N}(x+\sum_{j=1}^{q}r^{j}w_{j})\subseteq x+\bigoplus_{j=1}^{q}\R w_j$. On the other hand, by the proof of Theorem \ref{N}-case(2), ${\bf N}(x+\sum_{j=1}^{q}r^{j}w_{j})$ is a $q$-dimensional orbit, which implies that ${\bf N}(x+\sum_{j=1}^{q}r^{j}w_{j})=x+\bigoplus_{j=1}^{q}\R w_j$.
It follows that ${\bf K}_{0}{\bf A}{\bf N}$ leaves the cylinder $C^{p-1}_q(r)$ invariant. More precisely, ${\bf K}_{0}= SO(p-q)$ acts canonically on $\mathbb{S}^{p-q-1}(r)$ and trivially on $\bigoplus_{j=1}^{q} \R w_{j}$, ${\bf A}$ and ${\bf N}$ act trivially on $\mathbb{S}^{p-q-1}(r)$, ${\bf N}$ acts
transitively on $\bigoplus_{j=1}^{q}\R w_{j}$ (here $\bigoplus_{j=1}^{q}\R w_{j}$ is embedded in $C^{p-1}_q(r)$), and ${\bf A}$ has $2^{2q-1}$ orbits on $\bigoplus_{j=1}^{q}\R w_{j}$ (namely, one orbit of dimension zero, $2q\choose 1$ orbits of dimension one, ...,  $2q\choose q$ orbits of dimension $q$). This shows that the orbits of $AN$ on $C^{p-1}_q(r)$ are precisely the $p$-dimensional degenerate affine subspaces $x+\bigoplus_{j=1}^{q}\R w_{j}$ with $x\in \mathbb{S}^{p-q-1}(r)\subset C^{p-1}_q(r)$. If $p=q+1$, then $K_0$ is trivial and so the action of $G$ reduces to the action of ${\bf A}{\bf N}$, which implies that the orbits are $\{\pm r\}\times \R^q$ in the cylinder.

Let $p>q+1$. Since ${\bf K}_{0}$ acts transitively on $\mathbb{S}^{p-q-1}(r)$ we see that the subgroup $Q ={\bf K}_{0}{\bf AN}$ acts transitively on $C^{p-1}_q(r)$. If ${\bf K}^{\prime}$ is a subgroup of ${\bf K}_0$, then the orbits of ${\bf K}^{\prime}{\bf AN}$ on the cylinder $C^{p-1}_q(r)$ correspond bijectively to the orbits of ${\bf K}^{\prime}$ on the sphere $\mathbb{S}^{p-q-1}(r)$. Now we can conclude the following corollary.

\begin{cor}\label{co-S^{p-1,q}}
	Let $G={\bf K}'{\bf AN}$, where ${\bf K}'\subseteq {\bf K}_0$, $x\in \mathbb{S}^{p-1,q}(r)$, where $r\in \R_{+}$, and $m=\dim G(x)$. Then $m\in \{q,... , p+q-1\}$ and the following statements hold.

(a) If $m\in \{p,...,p+q-1\}$, then there are exactly two orbits with dimension $m$ in $\mathbb{S}^{p-1,q}(r)$ determined as the connected components of  $\mathbb{S}^{p-1,q}(r)\cap(\bigcap_{i=0}^{p+q-m-1}\Pi_i\diagdown\bigcap_{i=0}^{p+q-m}\Pi_i)$. Furthermore, these orbits are not dependent on the choice of ${\bf K}'$.

(b) If $m\in \{q,...,p-1\}$, then $x=y+\Sigma_{j=1}^qr^j w_j \in \mathbb{S}^{p-1,q}(r)\cap \mathbb{W}^p= C^{p-1}_q(r)$, and $$G(x)={\bf K}'(y)\times \sum_{j=1}^q\R w_j.$$
\end{cor}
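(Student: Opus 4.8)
The plan is to dichotomize on whether $x$ lies in the degenerate subspace $\mathbb{W}^p=\bigcap_{i=1}^q\Pi_i$, since Propositions \ref{KAN} and \ref{AN} show that the two regimes yield disjoint ranges of orbit dimensions. First I would note that, as ${\bf K}_0$ normalizes ${\bf A}{\bf N}$, the group $G={\bf K}'{\bf A}{\bf N}$ is a subgroup squeezed between ${\bf A}{\bf N}$ and ${\bf K}_0{\bf A}{\bf N}$, so that $\dim G(x)$ is pinned down by those two propositions for every $x$. If $x\notin\mathbb{W}^p$, case (a) of each proposition gives $\dim G(x)=p+q-k$ with $k=\min\{i:x\notin\Pi_i\}\in\{1,\dots,q\}$, forcing $m\in\{p,\dots,p+q-1\}$; if $x\in\mathbb{W}^p$ then $x\in\mathbb{W}^p\cap\mathbb{S}^{p-1,q}(r)=C^{p-1}_q(r)$ (vacuous when $p=q$) and I expect $m\le p-1$. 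Recording this split simultaneously yields the global range $m\in\{q,\dots,p+q-1\}$ and separates statements (a) and (b).

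Part (a) is essentially a recapitulation of the discussion preceding the corollary. Setting $k=p+q-m$ turns the hypothesis on $x$ into $x\in\bigcap_{i=0}^{p+q-m-1}\Pi_i\diagdown\bigcap_{i=0}^{p+q-m}\Pi_i$; every point of $\mathbb{S}^{p-1,q}(r)\cap(\bigcap_{i=0}^{p+q-m-1}\Pi_i\diagdown\bigcap_{i=0}^{p+q-m}\Pi_i)$ then has $\min\{i:x\notin\Pi_i\}=k$, hence orbit dimension $m$ by Propositions \ref{KAN}-(a) and \ref{AN}-(a). Since $G\subset SO_\circ(p,q)$ acts isometrically it preserves $\mathbb{S}^{p-1,q}(r)$, and by Remark \ref{rem1} it preserves each $\bigcap_{i=1}^j\Pi_i$, so it preserves this intersection, which — as in the proof of Corollary \ref{co-Lambda} — is an $m$-dimensional manifold with two connected components (sorted by the sign of $x^{p-k+1}+x^{p+k}$). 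It is therefore partitioned into orbits all open of dimension $m$, so each component is a single orbit and $G(x)$ is the one containing $x$. Independence of ${\bf K}'$ follows by squeezing, as ${\bf A}{\bf N}(x)$ and ${\bf K}_0{\bf A}{\bf N}(x)$ already have dimension $m$ and lie in the same component.

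For part (b) I would write $x=y+\sum_{j=1}^q r^j w_j$ with $y\in\mathbb{S}^{p-q-1}(r)$, using that the $w_j$ are isotropic and orthogonal to the Euclidean block, so $\langle x,x\rangle=\langle y,y\rangle=r^2$. The structural facts assembled before the corollary do the work: ${\bf N}$ fixes $y$ and acts transitively on the fiber $\bigoplus_{j=1}^q\R w_j$ (by Case 2 of Theorem \ref{N} together with Remark \ref{rem1}), while ${\bf A}$ also fixes $y$ and preserves the fiber, so ${\bf A}{\bf N}(x)=y+\bigoplus_{j=1}^q\R w_j$. Because ${\bf K}'$ normalizes ${\bf A}{\bf N}$ we have $G(x)={\bf K}'({\bf A}{\bf N}(x))$, and as ${\bf K}'\subseteq{\bf K}_0=SO(p-q)$ fixes each $w_j$ and acts on $y$ by the standard rotation, applying ${\bf K}'$ to $y+\bigoplus_j\R w_j$ produces ${\bf K}'(y)+\bigoplus_j\R w_j$, i.e. $G(x)={\bf K}'(y)\times\sum_{j=1}^q\R w_j$. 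Its dimension is $\dim{\bf K}'(y)+q$, which runs over $\{q,\dots,p-1\}$ as $\dim{\bf K}'(y)$ runs over $\{0,\dots,p-q-1\}$.

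The step I expect to demand the most care is the product identification $G(x)={\bf K}'(y)\times\sum_j\R w_j$ in (b): it relies on the fiber through $x$ being all of $\bigoplus_j\R w_j$ rather than a proper affine piece — which is exactly where the transitivity of ${\bf N}$ from Theorem \ref{N} (Case 2) is needed, not merely the inclusion ${\bf N}(x)\subseteq x+\bigoplus_j\R w_j$ of Remark \ref{rem1} — together with the clean decoupling of the two actions, ${\bf K}'$ fixing the $w_j$ pointwise while ${\bf A}{\bf N}$ moves only within the fiber. Verifying that these actions genuinely split as a direct product, and not just a union of translates, is the one point where the normalizing relations between ${\bf K}_0$ and ${\bf A}{\bf N}$ must be invoked explicitly.
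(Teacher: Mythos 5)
Your proposal is correct and follows essentially the same route as the paper: part (a) repeats the open-orbit/connected-component argument of Corollary \ref{co-Lambda} using Propositions \ref{KAN}-(a) and \ref{AN}-(a) (with the ${\bf K}'$-independence obtained by squeezing between ${\bf AN}$ and ${\bf K}_0{\bf AN}$), and part (b) reproduces the paper's cylinder analysis, where the inclusion ${\bf N}(x)\subseteq x+\bigoplus_{j=1}^q\R w_j$ from Remark \ref{rem1} is upgraded to equality via the $q$-dimensionality of the ${\bf N}$-orbit from Case 2 of Theorem \ref{N}, after which ${\bf K}'$ is applied to the sphere factor. The only slip is the literal claim that ${\bf N}$ fixes $y$ --- it merely preserves the fiber $y+\bigoplus_{j=1}^q\R w_j$, acting trivially on the $\mathbb{S}^{p-q-1}(r)$ factor of the cylinder --- but since your deduction only uses transitivity of ${\bf N}$ on that fiber, exactly as in the paper, this does not affect the proof.
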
 

The following results generalizes Theorem 4.2 and Corollary 4.3 of \cite{BDV}.

\begin{theorem}\label{mainresult}
	Let $G={\bf K}'{\bf AN}$, where ${\bf K}'\subseteq {\bf K}_0$, $x\in \R^{p,q}$ a nonzero point and $m=\dim G(x)$.
	
	(a) If $x\in \Lambda^{p+q-1}$ then $m\in \{1,..., q\}\cup \{p,..., p+q-1\}$ and there are exactly two orbits with dimension $m$ in $\Lambda^{p+q-1}$. If  $m\in \{1,..., q\}$ then $G(x)$ is either $\Sigma_{i=1}^{m-1}\R w_{i}+\R_+ w_m$ or $\Sigma_{i=1}^{m-1}\R w_{i}+\R_- w_m$. If $m\in \{p,..., p+q-1\}$, then $G(x)$ is one of the connected components of $\bigcap_{i=0}^{p+q-m-1}\Pi_i\diagdown\bigcap_{i=0}^{p+q-m}\Pi_i$. Furthermore, $G(x)$ is not dependent on the choice of ${\bf K}'$.
	
	(b) If $x\in \mathbb{H}^{p,q-1}(r)$ for some $r>0$, then	
	$m\in \{p,p+1,..., p+q-1\}$ and there are exactly two orbits with dimension $m$ in $\mathbb{H}^{p,q-1}(r)$. If $m\in \{p, p+1, ..., p+q-1\}$ then $G(x)$ is one of the connected components of  $\mathbb{H}^{p,q-1}(r)\cap (\bigcap_{i=0}^{p+q-m-1}\Pi_i \diagdown\bigcap_{i=0}^{p+q-m}\Pi_i)$. Furthermore, $G(x)$ is not dependent on the choice of ${\bf K}'$. 
	
	(c) If $x\in \mathbb{S}^{p-1,q}(r)$, for some $r>0$, then $m\in \{q,.., p+q-1\}$ and the following statements are hold.
	
	($c_1$) If $m\in \{p,...,p+q-1\}$, then there are exactly two orbits with dimension $m$ in $\mathbb{S}^{p-1,q}(r)$ determined as the connected components of $\mathbb{S}^{p-1,q}(r)\cap(\bigcap_{i=0}^{p+q-m-1}\Pi_i\diagdown\bigcap_{i=0}^{p+q-m}\Pi_i)$. Furthermore, these orbits are not dependent on the choice of ${\bf K}'$.
	
	($c_2$) If $m\in \{q,...,p-1\}$, then $x=y+\Sigma_{j=1}^qr^j w_j \in \mathbb{S}^{p-1,q}(r)\cap \mathbb{W}^p= C^{p-1}_q(r)$, and $$G(x)={\bf K}'(y)\times \sum_{j=1}^q\R w_j.$$
\end{theorem}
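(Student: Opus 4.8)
The plan is to recognize that Theorem \ref{mainresult} is a consolidation of the three Corollaries \ref{co-Lambda}, \ref{co-H^{p,q-1}}, and \ref{co-S^{p-1,q}} already established, reorganized according to the causal type of the point $x$. First I would record the elementary but crucial observation that cases (a), (b), (c) partition the nonzero vectors of $\R^{p,q}$. Since $G={\bf K}'{\bf A}{\bf N}$ is a subgroup of $SO_\circ(p,q)$, it acts by isometries and therefore preserves the scalar product $\langle\cdot,\cdot\rangle$. Consequently $\langle x,x\rangle$ is constant along each orbit $G(x)$, and a nonzero $x$ satisfies exactly one of $\langle x,x\rangle=0$, $\langle x,x\rangle<0$, or $\langle x,x\rangle>0$, which places it in $\Lambda^{p+q-1}$, in $\mathbb{H}^{p,q-1}(r)$ with $r=\sqrt{-\langle x,x\rangle}$, or in $\mathbb{S}^{p-1,q}(r)$ with $r=\sqrt{\langle x,x\rangle}$, respectively.

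Next I would use this to conclude that every orbit is contained in a single hyperquadric: $G$ preserves each of the sets $\Lambda^{p+q-1}$, $\mathbb{H}^{p,q-1}(r)$, and $\mathbb{S}^{p-1,q}(r)$, so the full orbit decomposition of $\R^{p,q}\diagdown\{0\}$ under $G$ is nothing but the disjoint union of the orbit decompositions on these three families of hyperquadrics. With this reduction in place, each assertion of the theorem is obtained by quoting the corresponding corollary: part (a) is Corollary \ref{co-Lambda}, part (b) is Corollary \ref{co-H^{p,q-1}}, and part (c), including its two subcases $(c_1)$ and $(c_2)$, is Corollary \ref{co-S^{p-1,q}}.

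I do not expect any genuine obstacle at this stage, precisely because the substantive content has already been proved. The dimension computations (via Propositions \ref{KAN} and \ref{AN} together with $\dim G(x)=\dim G-\dim G_x$), the connectedness arguments identifying each top-dimensional orbit with a connected component of an intersection of the degenerate hyperplanes $\Pi_i$, the explicit description of the low-dimensional orbits in terms of the vectors $w_i$, and the independence of the orbits from the choice of ${\bf K}'$ on the hyperquadrics where ${\bf K}_0$ acts trivially, are all carried out in the corollaries. The only point needing verification is that the hypothesis of each corollary matches the case at hand, which is immediate from the sign characterization of the hyperquadrics above. Thus the proof reduces to assembling the three corollaries together with the observation that they exhaust the nonzero points of $\R^{p,q}$.
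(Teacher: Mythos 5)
Your proposal is correct and follows exactly the paper's route: the paper proves Theorem \ref{mainresult} in one line as an immediate consequence of Corollaries \ref{co-Lambda}, \ref{co-H^{p,q-1}} and \ref{co-S^{p-1,q}}. Your additional remark that $G\subseteq SO_\circ(p,q)$ preserves $\langle x,x\rangle$, so the three cases partition the nonzero vectors and each orbit lies in a single hyperquadric, is a sound (and slightly more explicit) justification of the same assembly step.
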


\begin{proof}
	It is an immediate consequence of Corollaries \ref{co-Lambda}, \ref{co-H^{p,q-1}} and \ref{co-S^{p-1,q}}.
\end{proof}

Theorem \ref{mainresult} shows that the orbits of ${\bf K}^{\prime}{\bf AN}$ on $\R^{p,q}\diagdown \mathbb{W}^{p}$ are independent of the choice of ${\bf K}^{\prime}$. Thus we get the following remarkable consequence.

\begin{pro}\label{degspace}
	There exist cohomogeneity one actions on $\R^{p,q}$, $p>q+1\geqslant 2$, which are
	orbit-equivalent on the complement of an $p$-dimensional degenerate subspace $\mathbb{W}^{p}$ of $\R^{p,q}$ and not orbit-equivalent on $\mathbb{W}^{p}.$
\end{pro}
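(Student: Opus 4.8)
The plan is to exhibit two subgroups of ${\bf K}_0 \cong SO(p-q)$ whose associated actions ${\bf K}'{\bf A}{\bf N}$ are both of cohomogeneity one, agree on $\R^{p,q}\setminus\mathbb{W}^p$, and disagree on $\mathbb{W}^p$. Concretely I would take ${\bf K}'_1={\bf K}_0$, giving $G_1=Q={\bf K}_0{\bf A}{\bf N}$, and ${\bf K}'_2=\{e\}$, giving $G_2={\bf A}{\bf N}$. The hypothesis $p>q+1$ forces $p-q\geqslant 2$, so the Euclidean sphere $\mathbb{S}^{p-q-1}(r)$ is positive-dimensional and $SO(p-q)$ acts on it nontrivially; this is exactly the room the construction needs, and it is the reason the case $p=q+1$ is excluded.

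First I would check that both actions are of cohomogeneity one. By Proposition \ref{AN}-(a) (equivalently Theorem \ref{mainresult}-(a)), any point $x$ with $x^p+x^{p+1}\neq 0$ has ${\bf A}{\bf N}$-orbit, hence $G_i$-orbit, of dimension $p+q-1$; since a linear action fixes the origin, neither $G_1$ nor $G_2$ is transitive, so both have cohomogeneity exactly one. This step uses only that the top-dimensional orbits arise from the ${\bf A}{\bf N}$-factor and are therefore insensitive to the choice of ${\bf K}'$.

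Next, on the complement $\R^{p,q}\setminus\mathbb{W}^p$ I would invoke Theorem \ref{mainresult} directly: parts (a), (b) and ($c_1$) assert that the orbits through points of $\Lambda^{p+q-1}$, of each $\mathbb{H}^{p,q-1}(r)$, and of the part of each $\mathbb{S}^{p-1,q}(r)$ lying outside $\mathbb{W}^p$ are independent of ${\bf K}'$. Thus $G_1$ and $G_2$ have literally the same orbits off $\mathbb{W}^p$, so the identity map is an orbit-equivalence there.

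The decisive step is the behaviour on $\mathbb{W}^p$. I would foliate the space-like part of $\mathbb{W}^p$ by the cylinders $C^{p-1}_q(r)=\mathbb{W}^p\cap\mathbb{S}^{p-1,q}(r)$ and apply Theorem \ref{mainresult}-($c_2$), which gives $G_i(x)={\bf K}'_i(y)\times\sum_{j=1}^q\R w_j$ for $x=y+\sum_j r^j w_j$. For $G_1$ the subgroup ${\bf K}_0=SO(p-q)$ acts transitively on $\mathbb{S}^{p-q-1}(r)$, so $G_1(x)=C^{p-1}_q(r)$ is the entire cylinder, of dimension $p-1$; for $G_2$ the orbit is $\{y\}\times\sum_j\R w_j$, of dimension $q$. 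Since $p>q+1$ gives $p-1>q$, these orbits have different dimensions, so $G_1$ is transitive on the cylinder while $G_2$ partitions it into infinitely many lower-dimensional orbits. As orbit dimension and the transitive/non-transitive partition type are preserved by any orbit-equivalence, $G_1$ and $G_2$ cannot be orbit-equivalent on $\mathbb{W}^p$. The only point requiring care is confirming that $C^{p-1}_q(r)$ really is a single $G_1$-orbit, which is precisely the transitivity of $SO(p-q)$ on $\mathbb{S}^{p-q-1}(r)$ together with the ${\bf A}{\bf N}$-transitivity on the fibre $\sum_j\R w_j$ recorded in the discussion preceding Corollary \ref{co-S^{p-1,q}}.
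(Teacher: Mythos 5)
Your proposal is correct and takes essentially the same route as the paper: the paper obtains the proposition directly from Theorem \ref{mainresult}, whose parts (a), (b), ($c_1$) give that the orbits off $\mathbb{W}^p$ are independent of ${\bf K}'$, and whose part ($c_2$) gives $G(x)={\bf K}'(y)\times\sum_{j=1}^q\R w_j$ on the cylinders $C^{p-1}_q(r)$, so your choices ${\bf K}'_1={\bf K}_0$ (transitive on each cylinder, orbit dimension $p-1$) versus ${\bf K}'_2=\{e\}$ (orbits of dimension $q<p-1$) are exactly the intended instantiation. One small repair: non-transitivity of a linear action does not by itself rule out an open orbit (e.g. $GL(n,\R)$ acting on $\R^n$ fixes the origin yet has an open orbit), so to justify cohomogeneity \emph{exactly} one you should instead observe that $G_i\subseteq SO_\circ(p,q)$ preserves the scalar product, hence every orbit lies in a level set of $x\mapsto\langle x,x\rangle$ and cannot be open --- equivalently, Propositions \ref{KAN} and \ref{AN} list the dimensions of all orbits and the maximum is $p+q-1$.
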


\bibliographystyle{amsplain}

\end{document}